\newtheorem{thm}{Theorem}
\newtheorem{cor}{Corollary}
\newtheorem{lem}{Lemma}[section]
\newtheorem{prop}[lem]{Proposition}
\newtheorem{rem}{Remark}
\newtheorem*{claim}{Claim}
\newcommand{\spin}{\mathrm{Spin}}
\newcommand{\C}{\mathbb{C}}
\newcommand{\HH}{\mathbb{H}}
\newcommand{\R}{\mathbb{R}}
\newcommand{\Z}{\mathbb{Z}}
\newcommand{\beqt}{\begin{equation}}  \newcommand{\eeqt}{\end{equation}}
\newcommand{\bal}{\begin{align}}      \newcommand{\eal}{\end{align}}
\newcommand{\ba}{\begin{array}}      \newcommand{\ea}{\end{array}}
\newcommand{\bc}{\begin{center}}     \newcommand{\ec}{\end{center}}
\newcommand{\be}{\begin{enumerate}}  \newcommand{\ee}{\end{enumerate}}
\newcommand{\beq}{\begin{eqnarray}}  \newcommand{\eeq}{\end{eqnarray}}
\newcommand{\beQ}{\begin{eqnarray*}} \newcommand{\eeQ}{\end{eqnarray*}}
\newcommand{\bi}{\begin{itemize}}    \newcommand{\ei}{\end{itemize}}
\newcommand{\bt}{\begin{tabular}}    \newcommand{\et}{\end{tabular}}
\begin{document}
\title{Spinorial Representation of Submanifolds in Riemannian space forms}
\author{Pierre Bayard, Marie-Am\'elie Lawn and  Julien Roth}

\maketitle

\begin{abstract}
In this paper we give a spinorial representation of submanifolds of any dimension and codimension into Riemannian space forms in terms of the existence of so called generalized Killing spinors. We then discuss several applications, among them a new and concise proof of the fundamental theorem of submanifold theory. We also recover results of T. Friedrich, B. Morel and the authors in dimension 2 and 3. \end{abstract}
{\it keywords:} Spin geometry, Isometric Immersions, Weierstrass Representation.\\\\
\noindent
{\it 2000 Mathematics Subject Classification:} 53C27, 53C40.

\date{}
\maketitle\pagenumbering{arabic}
\section{Introduction}
One of the fundamental problems in submanifold theory deals with the existence of isometric
immersions from a Riemannian manifold $M^n$ into another fixed Riemannian manifold $N^{n+p}$. If the ambient manifold is a space form $\mathbb{R}^{n+p}$, $\mathbb{S}^{n+p}$ or $\mathbb{H}^{n+p}$, the fundamental theorem of submanifold theory states that the Gau\ss, Ricci and Codazzi equations, also called structure equations, are necessary and sufficient conditions.

 In the case of surfaces, another approach is given by the study of Weierstrass representations. Historically, these representations are describing a conformal minimal immersion of a Riemann surface $M$ into the three-dimensional Euclidean space $\mathbb{R}^3$. Precisely, given a pair $(h, g)$ consisting of a holomorphic and a meromorphic function, the formula
$$f(x, y) = \Re e\int\left((1-g^2(z))h(z), (1+g^2(z))h(z), 2g(z)h(z)\right)dz,$$
with $z =x+iy$ some complex coordinate, gives a local parametrization of a
minimal surface in Euclidean three-space. Conversely every minimal surface
can be parametrized in this way with respect to isothermal coordinates. However, relaxing the condition of holomorphicity on the pair $(h,g)$, this representation is much more general and can actually describe all surfaces in $\mathbb{R}^3$ as shown in \cite{Ke}. 

At the end of the 1990s, following an idea of U. Abresch and D. Sullivan, R. Kusner and N.
Schmidt reformulated this approach in a more concise and simpler way in terms of spinor fields (see \cite{KS}). These so called spinorial Weierstrass representations were studied extensively by B.G. Konopelchenko, I. Taimanov and many others (see \cite{Ko,KL,Ta,Ta2} and the references there).

However these formulae were given in local coordinates and remained purely computational until Friedrich gave in \cite{Fr} an elegant and geometrically invariant description using spinor bundles. We point out that the equivalence between the two approaches was recently showed in \cite{RR}. The main idea is to use the identification between the ambient spinor bundle restricted to the surface and the spinor bundle of the surface.  Note that the condition to be a spin manifold is not restrictive here since any oriented surface is also spin. More generally, the restriction $\varphi$ of a parallel spinor field on $\mathbb{R}^{n+1}$ to an 
oriented Riemannian hypersurface $M^n$ is a solution of a \textit{generalized Killing
equation}
$$\nabla^{\Sigma M}_X\varphi=A(X)\cdot\varphi,$$ 
where  $\nabla^{\Sigma M}$ and $\cdot$ are respectively the  spin connection and the Clifford multiplication on $M$, and $A$ is the shape operator of the immersion. Conversely, Friedrich showed that in the case where $M$ is a surface, if there exists a particular spinor field $\varphi$ satisfying the generalized Killing equation, where $A$ is an arbitrary field of symmetric endomorphisms  of the tangent bundle, then there exists an isometric immersion of $M$ into $\mathbb{R}^3$ with shape operator $A$. Moreover $\varphi$ is the restriction to $M$ of a parallel spinor of $\mathbb{R}^3$. The proof consists in showing that $A$ indeed satisfies the structure equations. This result was generalized to surfaces into other  three-dimensional ambient spaces \cite{Mo,NR,Ro}, to three-dimensional manifolds into four-dimensional space forms \cite{LR1,NR} and also to the two-dimensional pseudo-Riemannian setting \cite{LR2}. However the question whether in general a manifold of arbitrary dimension carrying a generalized Killing spinor can be immersed isometrically into some Euclidean space remained until now unanswered. Some of the few achievement in this direction were obtained in \cite{AMM} for real analytic manifolds and in \cite{BGM,Na} when $A$ is a Codazzi tensor, showing the existence of an immersion into a Ricci flat manifold admitting a parallel spinor which restricts to $\varphi$. 

Similarly, in higher codimension, very little is known. In \cite{BLR}, we extended the approach to the case of surfaces in four-dimensional space forms. The key point was to use the remark due to B\"ar \cite{Ba} that an ambient spinor restricted to an immersed submanifold $M$ can be identified with a section of the spinor bundle of the submanifold twisted with the spin bundle of the normal bundle. This was then extended to the pseudo-Riemannian setting in \cite{Bay,BP}.

Following the same idea, we use in this paper a particular twisted spin bundle over a spin manifold of arbitrary dimension to give a geometrically invariant spinorial representation of submanifolds of Euclidean spaces in any codimension. Note that our proof does not use the structure equations but merely the existence of a generalized Killing spinor on the manifold. We later show that one indeed recovers the previously mentioned result of Friedrich \cite{Fr} in the case of surfaces in $\R^3,$ as well as the one of Lawn-Roth \cite{LR1} for three-dimensional hypersurfaces and of Bayard-Lawn-Roth \cite{BLR} for surfaces in $\R^4$ (section 7). It is worth pointing out that the study of generalized Killing spinors has revealed very interesting applications. Moroianu and Semmelmann were for instance able to construct new examples of Lagrangian submanifolds of the nearly K\"ahler $\mathbb{S}^3\times\mathbb{S}^3$ using the existence of such spinors on the sphere $\mathbb{S}^3$ \cite{MS}. Moreover it is well-known that there is a close relationship to G-structures: for instance a generalized Killing spinor defines a cocalibrated G2-structure on the manifold in dimension 7 and a half-flat SU(3)-structure in dimension 6 (see for example \cite{CS}). However the existence of such spinors is a non-trivial problem: our construction is therefore of particular interest.

 Besides the above-mentioned, we discuss several other applications. A notable achievement is a new and concise proof of the fundamental theorem of submanifold theory. In the special case of surfaces, we show that our approach is equivalent to the spinorial Weierstrass representations, i.e., we obtain explicit formulas in terms of functions involving the components of the spinor field which are holomorphic if the surface is minimal.  
Our result can thus be seing as a generalization of most of the concrete Weierstrass representation formulae existing in the literature: it provides a general framework to understand formulae appearing in a variety of contexts. Moreover, since the basic ideas and constructions behind our representation are fairly simple, we hope that our result will be useful to obtain new concrete Weierstrass representation formulae, once some geometric context is specified: this is especially interesting for surfaces, in low-dimensional pseudo-Riemannian space forms, under some curvature assumptions.

Finally, in the last section, we extend our result to submanifolds immersed into the other space forms $S^n$ and $\HH^n,$ and recover the results of Morel \cite{Mo} if $n=3.$ 

\section{Preliminaries}\label{sec preliminaries}
\subsection{The spin representation}
Let us denote by $Cl_n$ the real Clifford algebra on $\R^n$ with its standard scalar product. We consider the representation
\begin{eqnarray*}
\rho:\hspace{.5cm} Cl_n&\rightarrow& End(Cl_n)\\
a&\mapsto& \xi\mapsto a\xi
\end{eqnarray*}
and its restriction to the group $Spin(n)$
\begin{eqnarray*}
\rho_{|Spin(n)}:\hspace{.5cm} Spin(n)&\rightarrow& GL(Cl_n)\\
a&\mapsto& \xi\mapsto a\xi.
\end{eqnarray*}
Note that this is not the adjoint representation of the spin group on the Clifford algebra, but rather the representation given by left multiplication.\\ Moreover we want to point out that we are not taking as usual the restriction of an irreducible representation of the Clifford algebra to the spin group, but that we consider instead the restriction of the entire real Clifford algebra. This real representation splits into a sum of  $2^k$ copies of spinor spaces of dimension $2^{n-k}$, where the number $k$ depends on the dimension $n$ and can be computed using the Radon-Hurwitz numbers (we refer to \cite{Lu} for further details). \\
If $p+q=n,$ we have a natural map
$$Spin(p)\times Spin(q)\rightarrow Spin(n)$$
associated to the splitting $\R^n=\R^p\oplus\R^q$ and to the corresponding isomorphism of Clifford algebras
$$Cl_n=Cl_p\hat{\otimes} Cl_q,$$
where $\hat{\otimes}$ denotes the $\Z_2-$graded tensor product. \\
We get thus the following representation, still denoted by $\rho,$
\begin{eqnarray}
\rho:\hspace{.5cm} Spin(p)\times Spin(q)&\rightarrow& GL(Cl_n)\label{rep spin p spin q}\\
a&\mapsto& \xi\mapsto a\xi.\nonumber
\end{eqnarray}
\subsection{The twisted spinor bundle $\Sigma$}
We consider $M$ a $p$-dimensional Riemannian manifold, $E\rightarrow M$ a bundle of rank $q,$ with a fibre metric and a compatible connection. We assume that $E$ and $TM$ are oriented and spin, with given spin structures
$$\tilde{Q}_M\stackrel{2:1}{\rightarrow} Q_M\hspace{.5cm}\mbox{and}\hspace{.5cm}\tilde{Q}_E\stackrel{2:1}{\rightarrow} Q_E$$
where $Q_M$ and $Q_E$ are the bundles of positively oriented orthonormal frames of $TM$ and $E,$ and we set
$$\tilde{Q}:=\tilde{Q}_M\times_M \tilde{Q}_E;$$
this is a $Spin(p)\times Spin(q)$-principal bundle. We define the associated bundle 
$$\Sigma:=\tilde{Q}\times_{\rho} Cl_n$$
and its restriction
\begin{equation}\label{def usigma}
U\Sigma:=\tilde{Q}\times_{\rho} Spin(n)\hspace{.5cm}\subset\ \Sigma
\end{equation}
where $\rho$ is the representation (\ref{rep spin p spin q}) given by left multiplication. We remark that if we used the adjoint representation instead, we would just get the Clifford algebra bundle. Again we point out that our spinor bundle $\Sigma$ is a real vector bundle with fiber the entire Clifford algebra and not, as usual, an irreducible complex Clifford module.\\
The vector bundle $\Sigma$ is equipped with the covariant derivative $\nabla$ naturally associated to the spinorial connections on $\tilde{Q}_M$ and $\tilde{Q}_E.$ 
\begin{rem}\label{id sigma tensor product} The bundle $\Sigma$ is a spinor bundle on $TM$ twisted by a spinor bundle on $E:$ indeed, let us consider the representations
$$\rho_1:\ Spin(p)\rightarrow GL(Cl_p)\hspace{1cm}\mbox{and}\hspace{1cm}\rho_2:\ Spin(q)\rightarrow GL(Cl_q)$$
given by left multiplication, and the associated bundles
$$\Sigma_1:=\tilde{Q}_M\times_{\rho_1} Cl_p\hspace{1cm}\mbox{and}\hspace{1cm}\Sigma_2:=\tilde{Q}_E\times_{\rho_2} Cl_q$$
equipped with their natural connections $\nabla^1$ and $\nabla^2;$ then
$$\Sigma_1\otimes\Sigma_2\ \simeq\ \Sigma$$
and
$$\nabla^1\otimes id_{\Sigma_2}\ \oplus\ id_{\Sigma_1}\otimes\nabla^2\ \simeq\ \nabla.$$
This is a consequence of the fact that the natural isomorphism $i:Cl_p\otimes Cl_q\simeq Cl_n$ is an equivalence of representations of $Spin(p)\times Spin(q),$ i.e., for $g_1\in Spin(p)$ and $g_2\in Spin(q),$
$$i \circ\ \rho_1(g_1)\otimes\rho_2(g_2)= \rho(g_1,g_2) \circ i;$$
indeed, if $\xi_1\in Cl_p$ and $\xi_2\in Cl_q,$
$$i(\rho_1(g_1)\otimes\rho_2(g_2) (\xi_1\otimes\xi_2))=i(g_1\xi_1\otimes g_2\xi_2)=g_1\xi_1g_2\xi_2=g_1g_2\xi_1\xi_2=\rho(g_1,g_2)(i(\xi_1\otimes\xi_2)),$$
where the products in the third and fourth terms are products in $Cl_n$ (note that $\xi_1$ and $g_2$ commute since $\xi_1$ belongs to $Cl_p$ and $g_2$ is a product of an even number of vectors belonging to $\R^q$).
\end{rem}
As in the usual construction in spin geometry, the spin bundle $\Sigma$ is endowed with a natural action of the Clifford bundle $Cl(TM\oplus E):$ indeed, the Clifford product
\begin{eqnarray*}
Cl(\R^p\oplus\R^q)\times Cl_n&\rightarrow& Cl_n\\
(\eta,\xi)&\mapsto& \eta\cdot\xi
\end{eqnarray*}
is $Spin(p)\times Spin(q)$ equivariant, if the action of $Spin(p)\times Spin(q)$ on $Cl(\R^p\oplus\R^q)$ is the adjoint action, and the action on $Cl_n$ is the left multiplication: we obviously have, for $(g_1,g_2)\in Spin(p)\times Spin(q)$ and $g=g_1g_2\in Spin(n),$
$$(g\xi g^{-1})\cdot(g\eta)=g\cdot(\xi\eta)$$
for $\xi,\eta\in Cl_n.$ 

\subsection{A $Cl_n$-valued bilinear map on $\Sigma$}
Let us denote by $\tau:Cl_n\rightarrow Cl_n$ the anti-automorphism of $Cl_n$ such that
$$\tau(x_1\cdot x_2\cdots x_k)=x_k\cdots x_2\cdot x_1$$ 
for all $x_1,x_2,\ldots, x_k\in\R^n,$ where `$\cdot$'  denotes as usual the Clifford multiplication, and set
\begin{eqnarray}
\langle\langle.,.\rangle\rangle:\hspace{.5cm}Cl_n\times Cl_n&\rightarrow& Cl_n\label{def brackets 1}\\
(\xi,\xi')&\mapsto& \tau(\xi')\xi.\nonumber
\end{eqnarray}
This map is $Spin(n)-$invariant: for all $\xi,\xi'\in Cl_n$ and $g\in Spin(n)$ we have
$$\langle\langle g\xi,g\xi'\rangle\rangle=\tau(g\xi')g\xi= \tau(\xi')\tau(g)g\xi= \tau(\xi')\xi=\langle\langle \xi,\xi'\rangle\rangle,$$
since $Spin(n)\subset \{g\in Cl_n^0:\ \tau(g)g=1\};$ this map thus induces a $Cl_n-$valued map
\begin{eqnarray}
\langle\langle.,.\rangle\rangle:\hspace{.5cm}\Sigma\times \Sigma&\rightarrow& Cl_n\label{def brackets 2}\\
(\varphi,\varphi')&\mapsto& \langle\langle[\varphi],[\varphi']\rangle\rangle\nonumber
\end{eqnarray}
where $[\varphi]$ and $[\varphi']\in Cl_n$ represent $\varphi$ and $\varphi'$ in some spinorial frame $\tilde{s}\in\tilde{Q}.$
\begin{lem}
The map $\langle\langle.,.\rangle\rangle:$ $\Sigma\times \Sigma\rightarrow Cl_n$ satisfies the following properties: for all $\varphi,\psi\in\Gamma(\Sigma)$ and $X\in \Gamma(TM),$
\begin{equation}\label{scalar product property1}
\langle\langle \varphi,\psi\rangle\rangle=\tau\langle\langle\psi,\varphi\rangle\rangle
\end{equation}
and
\begin{equation}\label{scalar product property2}
\langle\langle X\cdot\varphi,\psi\rangle\rangle=\langle\langle\varphi,X\cdot \psi\rangle\rangle.
\end{equation}
\end{lem}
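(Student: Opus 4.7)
The plan is to verify both identities at the level of representatives in $Cl_n$, using the definition $\langle\langle\xi,\xi'\rangle\rangle=\tau(\xi')\xi$. Since $\langle\langle\cdot,\cdot\rangle\rangle$ on $\Sigma\times\Sigma$ is defined by picking a common spinorial frame $\tilde{s}\in\tilde{Q}$ and the $Spin(n)$-invariance of the pairing (already established in the preamble just before equation (\ref{def brackets 2})) guarantees that the resulting element of $Cl_n$ is independent of this choice, it is enough to establish (\ref{scalar product property1}) and (\ref{scalar product property2}) for $[\varphi],[\psi]\in Cl_n$ and vectors $X\in\R^n\subset Cl_n$.

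For (\ref{scalar product property1}), the whole content is that $\tau$ is an involutive anti-automorphism. First I would compute
\[
\tau\langle\langle\psi,\varphi\rangle\rangle \;=\; \tau\bigl(\tau(\varphi)\psi\bigr)\;=\;\tau(\psi)\,\tau\bigl(\tau(\varphi)\bigr)\;=\;\tau(\psi)\varphi\;=\;\langle\langle\varphi,\psi\rangle\rangle,
\]
using $\tau(ab)=\tau(b)\tau(a)$ and $\tau^2=\mathrm{id}$ (both immediate from the defining formula $\tau(x_1\cdots x_k)=x_k\cdots x_1$ on products of vectors).

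For (\ref{scalar product property2}), the key observation is that $\tau(X)=X$ for $X\in\R^n$, since $\tau$ reverses the order of a product of a \emph{single} vector. Then
\[
\langle\langle\varphi,X\cdot\psi\rangle\rangle \;=\; \tau(X\psi)\,\varphi\;=\;\tau(\psi)\,\tau(X)\,\varphi\;=\;\tau(\psi)\,X\,\varphi\;=\;\langle\langle X\cdot\varphi,\psi\rangle\rangle,
\]
which is the desired identity.

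The only potential subtlety is making sure the computation passes rigorously from $Cl_n$ down to $\Sigma$; I do not expect this to be a real obstacle because both sides of each identity depend on the representatives in the same equivariant way under the left $Spin(p)\times Spin(q)$-action (indeed, Clifford multiplication by a vector of $TM\oplus E$ was shown above to be $Spin(p)\times Spin(q)$-equivariant, and the pairing itself is $Spin(n)$-invariant, hence a fortiori $Spin(p)\times Spin(q)$-invariant). So the main work is essentially the two short algebraic manipulations above.
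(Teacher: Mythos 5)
Your proof is correct and is essentially the paper's own argument: both identities are checked on representatives in a fixed spinorial frame using that $\tau$ is an involutive anti-automorphism fixing vectors, with well-definedness guaranteed by the $Spin(n)$-invariance of the pairing established just before the lemma. The paper runs the two chains of equalities in the opposite direction but the content is identical.
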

\begin{proof}
We have
$$\langle\langle \varphi,\psi\rangle\rangle=\tau[\psi]\ [\varphi]=\tau(\tau[\varphi]\ [\psi])=\tau\langle\langle\psi,\varphi\rangle\rangle$$
and
$$\langle\langle X\cdot\varphi,\psi\rangle\rangle=\tau[\psi]\ [X][\varphi]=\tau([X][\psi])[\varphi]=\langle\langle\varphi,X\cdot \psi\rangle\rangle$$
where $[\varphi],$ $[\psi]$ and $[X]\ \in Cl_n$ represent $\varphi,$ $\psi$ and $X$ in some given frame $\tilde{s}\in\tilde{Q}.$
\end{proof}
\begin{lem}
The connection $\nabla$ is compatible with the product $\langle\langle.,.\rangle\rangle:$
$$\partial_X\langle\langle\varphi,\varphi'\rangle\rangle=\langle\langle\nabla_X\varphi,\varphi'\rangle\rangle+\langle\langle\varphi,\nabla_X\varphi'\rangle\rangle$$
for all $\varphi,\varphi'\in\Gamma(\Sigma)$ and $X\in\Gamma(TM).$ 
\end{lem}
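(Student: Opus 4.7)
The plan is to reduce the statement to a computation in a local spinorial frame, where everything becomes Clifford algebra. First I would fix a local section $\tilde{s}$ of $\tilde{Q} = \tilde{Q}_M \times_M \tilde{Q}_E$ over an open set $U \subset M$, and write $\varphi = [\tilde{s}, [\varphi]]$, $\varphi' = [\tilde{s}, [\varphi']]$, where $[\varphi], [\varphi'] : U \to Cl_n$ are the smooth representatives. By the definition of the bracket, $\langle\langle\varphi,\varphi'\rangle\rangle = \tau[\varphi']\cdot[\varphi]$ as a $Cl_n$-valued function on $U$ (well-defined because of the $\mathrm{Spin}(n)$-invariance already established).

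Next I would express the covariant derivative in the same trivialization. The spinorial connection on $\tilde{Q}$ pulls back via $\tilde{s}$ to a connection $1$-form $\omega = \omega_M + \omega_E$ valued in $\mathfrak{spin}(p) \oplus \mathfrak{spin}(q) \subset \mathfrak{spin}(n) \subset Cl_n^{0}$, and by construction of the associated bundle covariant derivative,
\begin{equation*}
\nabla_X\varphi = \bigl[\tilde{s},\ X([\varphi]) + \omega(X)\cdot[\varphi]\bigr],
\end{equation*}
with Clifford multiplication in $Cl_n$ on the right. The crucial observation is that $\omega(X) \in \mathfrak{spin}(n)$, which, as a subspace of $Cl_n$, is spanned by elements of the form $e_i e_j$ with $i\neq j$; since $\tau(e_i e_j) = e_j e_i = -e_i e_j$, we obtain $\tau(\omega(X)) = -\omega(X)$.

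With these ingredients, the verification is a direct computation:
\begin{align*}
\langle\langle\nabla_X\varphi,\varphi'\rangle\rangle + \langle\langle\varphi,\nabla_X\varphi'\rangle\rangle
&= \tau[\varphi']\bigl(X[\varphi] + \omega(X)[\varphi]\bigr) + \tau\bigl(X[\varphi'] + \omega(X)[\varphi']\bigr)[\varphi] \\
&= \tau[\varphi']\,X[\varphi] + X(\tau[\varphi'])[\varphi] + \tau[\varphi']\bigl(\omega(X) + \tau(\omega(X))\bigr)[\varphi] \\
&= X\bigl(\tau[\varphi']\cdot[\varphi]\bigr) + 0 \ =\ \partial_X\langle\langle\varphi,\varphi'\rangle\rangle,
\end{align*}
using that $\tau$ is an anti-automorphism (so $\tau(ab)=\tau(b)\tau(a)$ and $\tau$ commutes with directional derivatives) and that the last bracket vanishes by the previous paragraph.

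The calculation itself is short; the only real subtlety — and the step I would double-check most carefully — is the sign $\tau(\omega(X)) = -\omega(X)$, which hinges on $\omega(X)$ lying in $\mathfrak{spin}(n)$ rather than merely in $Cl_n^{0}$. Everything else is formal manipulation in the Clifford algebra, independent of the choice of local frame because both sides transform equivariantly under $\mathrm{Spin}(p)\times\mathrm{Spin}(q)$.
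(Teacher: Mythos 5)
Your proof is correct and follows essentially the same route as the paper's: both work in a local spinorial frame, write $\nabla_X\varphi$ via the connection form with values in $\Lambda^2\R^n\simeq\mathfrak{spin}(n)\subset Cl_n$, and use $\tau(\omega(X))=-\omega(X)$ to cancel the connection terms. No gaps.
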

\begin{proof}
If $\varphi=[\tilde{s},[\varphi]]$ is a section of $\Sigma=\tilde{Q}\times_\rho Cl_n,$ we have
$$\nabla_X\varphi=\left[\tilde{s},\partial_X[\varphi]+\rho_*(\tilde{s}^*\alpha(X))([\varphi])\right],\hspace{1cm}\forall X\in\ TM,$$
where $\rho$ is the representation (\ref{rep spin p spin q}) and $\alpha$ is the connection form on $\tilde{Q};$ the term $\rho_*(\tilde{s}^*\alpha(X))$ is an endomorphism of $Cl_n$ given by the multiplication on the left by an element belonging to $\Lambda^2\R^n\subset Cl_n,$ still denoted by  $\rho_*(\tilde{s}^*\alpha(X)).$ Such an element satisfies
$$\tau\left( \rho_*(\tilde{s}^*\alpha(X))\right)=-\rho_*(\tilde{s}^*\alpha(X)),$$
and we have
\begin{eqnarray*}
\langle\langle\nabla_X\varphi,\varphi'\rangle\rangle+\langle\langle\varphi,\nabla_X\varphi'\rangle\rangle&=&\tau\{[\varphi']\}\left(\partial_X[\varphi]+\rho_*(\tilde{s}^*\alpha(X))[\varphi]\right)\\
&&+\tau\left\{\partial_X[\varphi']+\rho_*(\tilde{s}^*\alpha(X))[\varphi']\right\}[\varphi]\\
&=&\tau\{[\varphi']\}\partial_X[\varphi]+\tau\left\{\partial_X[\varphi']\right\}[\varphi]\\
&=&\partial_X\langle\langle\varphi,\varphi'\rangle\rangle.
\end{eqnarray*}
\end{proof}
\section{The spin geometry of a submanifold in $\R^n$} 
We keep the notation of the previous section, assuming moreover here that $M$ is a submanifold of $\R^n$ and that $E\rightarrow M$ is its normal bundle. Let as before $\tilde{Q}_M\stackrel{2:1}{\rightarrow} Q_{M}$ be a spin structure of $M$. Our goal is to construct $\tilde Q$ such that we obtain an identification 
$$\Sigma=\tilde{Q}\times_{\rho} Cl_n\simeq M\times Cl_n.$$
Although this type of result is used in several places in the literature, we could not find a complete statement or proof. Therefore we will give a detailed proof, which we believe may be useful in its own right.\\
Let $(e_1, \dots e_p )$ resp. $(e_{p+1},\dots e_{p+q} )$ be orthonormal frames of $TM$ resp. $E$ and $Q_{{\mathbb{R}}^n}$ the bundle of positively oriented orthonormal frames of $\mathbb{R}^n$. We can consider the map 
\begin{eqnarray*}\iota:  Q_M\times_M Q_E&\rightarrow& Q_{\mathbb{R}^n}\\
((e_1, \dots e_p ), (e_{p+1},\dots e_{p+q} ))&\mapsto& (e_1,e_2,\dots e_{p+q})
\end{eqnarray*}
given by the concatenation of frames.\\
The map
$$\tilde{Q}_M\times_M Q_E\rightarrow Q_M\times_M Q_E$$
is obviously a two-to-one covering of $Q_M\times_M Q_E$.\\
Let now $\tilde{Q}_{\mathbb{R}^n}\stackrel{2:1}{\rightarrow} Q_{{\mathbb{R}}^n}$ be the (unique) spin structure of $\mathbb{R}^n$.\\ Then the bundle $$\tilde{Q}:=(\tilde{Q}_M\times_M Q_E)\times_{Q_{\mathbb{R}^n}}\tilde{Q}_{\mathbb{R}^n}$$ is a $\spin(p)\times\spin(q)$-principal bundle over $M$ and a four-to-one covering of $Q_M\times_M Q_E$.
Observe that $\tilde Q = \tilde Q_M \times_M \tilde Q_E$, where $\tilde Q_E := \tilde Q / \spin(p)$ (and the projection $\tilde Q / \spin(q) \to \tilde Q_M$ is a map of principal $\spin(p)$-bundles, hence an isomorphism).  Moreover, $\tilde Q_E$ is a spin structure on $E$, canonically associated to the spin structures on $M$ and $\mathbb{R}^n$.
\begin{claim} Consider the bundle $$\tilde{Q}\times_{c}\spin(p+q):=(\tilde{Q}\times\spin(p+q))/(\spin(p)\times\spin(q)),$$ where $$c:Spin(p)\times Spin(q)\rightarrow Spin(p+q)$$
is the map corresponding to the isomorphism of Clifford algebras
	$Cl_p \hat{\otimes} Cl_q \cong Cl_{p+q}$.\\
Then there is a canonical isomorphism of $\spin(n)$-principal bundles, $$\tilde{Q}\times_{c}\spin(p+q)\cong\tilde{Q}_{\mathbb{R}^n}|_M.$$
\end{claim}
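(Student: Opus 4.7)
The plan is to exhibit the isomorphism as an explicit canonical map. By the very definition of $\tilde Q$ as a fibre product, there is a tautological projection $\pi_3\colon\tilde Q\to\tilde Q_{\mathbb{R}^n}|_M$ onto the third factor. Using this, I would define
$$\phi\colon \tilde Q\times\spin(p+q)\longrightarrow\tilde Q_{\mathbb{R}^n}|_M, \qquad (\tilde q,g)\longmapsto \pi_3(\tilde q)\cdot g,$$
which is manifestly right $\spin(p+q)$-equivariant and preserves the base.

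The crucial step is to show that $\phi$ descends to the associated bundle $\tilde Q\times_c\spin(p+q)$. Writing $\tilde q=(\tilde s_M,s_E,\tilde s_{\mathbb{R}^n})\in\tilde Q$ and taking $(a_1,a_2)\in\spin(p)\times\spin(q)$, the $(\spin(p)\times\spin(q))$-action on $\tilde Q$ transforms $\tilde s_{\mathbb{R}^n}$ by some element of $\spin(p+q)$ whose image in $SO(p+q)$ is the block-diagonal $(\lambda_p(a_1),\lambda_q(a_2))$. The key algebraic input is that $c$, arising from the $\Z_2$-graded Clifford algebra isomorphism $Cl_p\hat{\otimes}Cl_q\cong Cl_{p+q}$, is exactly this lift at the spin-group level; hence $\pi_3(\tilde q\cdot(a_1,a_2))=\tilde s_{\mathbb{R}^n}\cdot c(a_1,a_2)$, which yields
$$\phi\bigl(\tilde q\cdot(a_1,a_2),\,c(a_1,a_2)^{-1}g\bigr)=\tilde s_{\mathbb{R}^n}\cdot c(a_1,a_2)c(a_1,a_2)^{-1}g=\phi(\tilde q,g).$$
Thus $\phi$ descends to a well-defined map $\bar\phi\colon\tilde Q\times_c\spin(p+q)\to\tilde Q_{\mathbb{R}^n}|_M$.

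To conclude, $\bar\phi$ is a base-preserving, right $\spin(p+q)$-equivariant morphism between two principal $\spin(p+q)$-bundles over $M$, and $\pi_3$ is surjective on fibres, so $\bar\phi$ is fibrewise surjective. Any such morphism of principal bundles with common structure group is automatically an isomorphism.

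The main obstacle is the equivariance identity $\pi_3(\tilde q\cdot(a_1,a_2))=\tilde s_{\mathbb{R}^n}\cdot c(a_1,a_2)$. Establishing it requires verifying that the homomorphism $c\colon\spin(p)\times\spin(q)\to\spin(p+q)$ induced by the $\Z_2$-graded Clifford algebra isomorphism is precisely the lift of the block inclusion $SO(p)\times SO(q)\hookrightarrow SO(p+q)$ that matches the fibre-product structure defining $\tilde Q$. This reduces to a normalisation check at the identity together with a connectedness argument to rule out sign ambiguity in the chosen lift.
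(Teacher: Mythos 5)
Your proposal is correct and follows essentially the same route as the paper: both define the map on $\tilde Q\times\spin(p+q)$ via the projection to the $\tilde Q_{\mathbb{R}^n}$-factor followed by the principal action, verify that it descends through the quotient by $\spin(p)\times\spin(q)$ using the $c$-equivariance of that projection, and conclude with the standard fact that a base-preserving equivariant morphism of principal bundles with the same structure group is an isomorphism. The only difference is that you make explicit the equivariance identity $\pi_3(\tilde q\cdot(a_1,a_2))=\pi_3(\tilde q)\cdot c(a_1,a_2)$, which the paper uses implicitly.
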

\begin{proof}
Consider the projection $\pi: \tilde{Q}\rightarrow \tilde{Q}_{\mathbb{R}^n}$ to the last factor. Then the map 
\begin{eqnarray*}
\tilde{\pi}: \tilde{Q}\times\spin(p+q)&\rightarrow& \tilde{Q}_{\mathbb{R}^n}\\
(\tilde{q},s)&\mapsto& s\pi(\tilde{q})
\end{eqnarray*} 
satisfies $\tilde{\pi}(s_0\tilde{q},ss_0^{-1})=\tilde{\pi}(\tilde{q},s)$ for any $s_0\in\spin(p)\times \spin(q)$, so $\tilde \pi$ descends to a map $\tilde Q \times_c \spin(p+q) \to \tilde Q_{\mathbb{R}^n}$.  The source is clearly a $\spin(p+q)$-principal bundle on $M$, as is the target, and the map is $\spin(p+q)$-equivariant and the identity over $M$.  Hence it is an isomorphism of principal bundles. 
\end{proof}
\begin{cor} If now $\rho:Spin(p)\times Spin(q)\rightarrow GL(Cl_n)$ is the map given by $\tilde{\rho}\circ c$, where $\tilde{\rho}:\spin(n)\rightarrow GL(Cl_n)$ is the representation induced by left multiplication, we get
$$\tilde{Q}\times_{\rho}Cl_n \cong \tilde{Q}_{\mathbb{R}^n}|_M\times_{\tilde{\rho}}Cl_n \cong M\times Cl_n.$$
\end{cor}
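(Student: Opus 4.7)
My plan is to derive both isomorphisms directly from the preceding claim, treating this corollary as essentially a formal consequence of $\tilde{Q}\times_{c}\spin(p+q)\cong\tilde{Q}_{\mathbb{R}^n}|_M$ together with the triviality of the spin structure on $\R^n$.

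For the first isomorphism, I would invoke the standard ``change of structure group'' principle for associated bundles: given a principal $G$-bundle $P\to M$, a Lie group homomorphism $c:G\to H$, and a representation $\tilde{\rho}:H\to GL(V)$, there is a canonical bundle isomorphism
$$(P\times_{c} H)\times_{\tilde{\rho}} V\ \simeq\ P\times_{\tilde{\rho}\circ c}V,$$
given on representatives by $[[p,h],v]\mapsto [p,\tilde{\rho}(h)v]$, with inverse $[p,v]\mapsto[[p,1_H],v]$. Applying this with $P=\tilde{Q}$, $G=\spin(p)\times\spin(q)$, $H=\spin(n)=\spin(p+q)$, $V=Cl_n$, and using the claim to replace $\tilde{Q}\times_{c}\spin(p+q)$ by $\tilde{Q}_{\mathbb{R}^n}|_M$, one obtains
$$\tilde{Q}\times_{\rho}Cl_n\ =\ \tilde{Q}\times_{\tilde{\rho}\circ c}Cl_n\ \simeq\ \tilde{Q}_{\mathbb{R}^n}|_M\times_{\tilde{\rho}}Cl_n.$$

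For the second isomorphism, I would use that the spin structure $\tilde{Q}_{\mathbb{R}^n}$ on $\R^n$ is globally trivial. Indeed, the canonical orthonormal frame $(\partial_1,\ldots,\partial_n)$ defines a global section of $Q_{\mathbb{R}^n}$; since $\R^n$ is simply connected, this section admits a (unique up to sheet) lift to a global section of the two-to-one cover $\tilde{Q}_{\mathbb{R}^n}$. Restricting this lift to $M$ yields a global section $\tilde{s}_0\in\Gamma(\tilde{Q}_{\mathbb{R}^n}|_M)$, and the map
$$M\times Cl_n\ \lgra\ \tilde{Q}_{\mathbb{R}^n}|_M\times_{\tilde{\rho}}Cl_n,\hspace{.5cm}(x,\xi)\ \longmapsto\ [\tilde{s}_0(x),\xi]$$
is smooth, linear on fibres, and a fibrewise bijection (any other representative $[\tilde{s}_0(x)\cdot g,\xi']$ with $g\in\spin(n)$ corresponds to the unique $\xi=\tilde{\rho}(g)\xi'$), hence a vector bundle isomorphism.

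I do not expect a genuine obstacle: everything reduces to bookkeeping with associated bundles once the claim is available. The one point to handle carefully is consistency of conventions, namely that the representation $\rho$ used throughout the paper (defined in (\ref{rep spin p spin q})) factors as $\tilde{\rho}\circ c$ with $\tilde{\rho}$ left multiplication on $Cl_n$; this is exactly the content of the isomorphism $Cl_p\hat{\otimes}Cl_q\simeq Cl_{p+q}$ being an equivalence of $\spin(p)\times\spin(q)$-representations, which was already checked in Remark \ref{id sigma tensor product}. Composing the two isomorphisms then gives the stated identification of $\Sigma$ with the trivial bundle $M\times Cl_n$.
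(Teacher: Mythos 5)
Your proof is correct and follows the same route as the paper: the first isomorphism is the standard change-of-structure-group identity for associated bundles applied to the preceding claim, and the second comes from the global triviality of the spin structure $\tilde{Q}_{\mathbb{R}^n}$. You simply spell out the details (the explicit associated-bundle maps and the lift of the canonical frame) that the paper leaves implicit.
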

\begin{proof} The first isomorphism is immediate from the claim, and the second follows since $\tilde Q_{\mathbb{R}^n}$ is trivial.
\end{proof}

Two connections are thus defined on $\Sigma,$ the connection $\nabla$ introduced in the previous section and the trivial connection $\partial;$ they satisfy the following Gauss formula:
\begin{equation}\label{gauss formula}
\partial_X\varphi=\nabla_X\varphi+\frac{1}{2}\sum_{j=1}^p e_j\cdot B(X,e_j)\cdot\varphi
\end{equation} 
for all $\varphi\in\Gamma(\Sigma)$ and all $X\in \Gamma(TM),$ where $B:TM\times TM\rightarrow E$ is the second fundamental form of $M$ into $\R^n.$ We refer to \cite{Ba} for the proof (in a slightly different context).

\section{Spinorial representation of submanifolds in $\R^n$}
We state the main result of the paper. Let $M$ be a $p$-dimensional Riemannian manifold and $E\rightarrow M$ a bundle of rank $q,$ with a fiber metric and a compatible connection; we assume that $E$ and $TM$ are oriented and spin, with given spin structures. We keep the notation of Section \ref{sec preliminaries}.
\begin{thm}\label{thm main result}
We moreover assume that $M$ is simply connected, and suppose that $B:TM\times TM\rightarrow E$ is bilinear and symmetric. The following statements are equivalent:
\begin{enumerate}
\item There exists a section $\varphi\in\Gamma(U\Sigma)$ such that
\begin{equation}\label{killing equation}
\nabla_X\varphi=-\frac{1}{2}\sum_{j=1}^pe_j\cdot B(X,e_j)\cdot\varphi
\end{equation}
for all $X\in TM.$
\item There exists an isometric immersion $F:\ M\rightarrow \R^n$ with normal bundle $E$ and second fundamental form $B.$\\
Moreover, $F=\int\xi$ where $\xi$ is the $\R^n-$valued 1-form defined by
\begin{equation}\label{def xi}
\xi(X):=\langle\langle X\cdot\varphi,\varphi\rangle\rangle
\end{equation}
for all $X\in TM.$
\end{enumerate}
\end{thm}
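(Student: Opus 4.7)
The direction $(2)\Rightarrow(1)$ is essentially an unraveling of the identifications from Section 3. Given the isometric immersion $F$ with second fundamental form $B$, the bundle $\Sigma$ becomes canonically isomorphic to $M\times Cl_n$, and any constant section $\varphi$ taking value in $\spin(n)\subset Cl_n$ lies in $U\Sigma$ and is parallel for the trivial connection $\partial$. The Gauss formula \eqref{gauss formula} then immediately yields \eqref{killing equation}.

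For the harder direction $(1)\Rightarrow(2)$, the plan is to recover $F$ from $\varphi$ by producing an $\R^n$-valued closed $1$-form and integrating. First, I would extend the definition \eqref{def xi} to a bundle map $\tilde\xi:TM\oplus E\to Cl_n$ via $\tilde\xi(V):=\langle\langle V\cdot\varphi,\varphi\rangle\rangle$ and check two properties: (a) $\tilde\xi$ takes values in $\R^n\subset Cl_n$, and (b) $\tilde\xi$ is a fiberwise linear isometry onto $\R^n$. For (a), note that in a local spinorial frame $\tilde\xi(V)=\tau[\varphi]\cdot[V]\cdot[\varphi]$; since $\varphi\in\Gamma(U\Sigma)$, the element $[\varphi]$ lies in $\spin(n)$, so $\tau[\varphi]=[\varphi]^{-1}$, and the expression is precisely the adjoint action of $\spin(n)$ on $\R^n$. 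For (b), compute $\tilde\xi(V)^2=\tau[\varphi]\cdot[V]\cdot([\varphi]\tau[\varphi])\cdot[V]\cdot[\varphi]=\tau[\varphi]\cdot[V]^2\cdot[\varphi]=-|V|^2$.

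The key computation is then to establish, for all $X,Y\in\Gamma(TM)$, the identity
\begin{equation*}
\partial_X\,\xi(Y)\;=\;\tilde\xi(\nabla_XY)\;+\;\tilde\xi(B(X,Y)),
\end{equation*}
using the compatibility of $\nabla$ with $\langle\langle\,,\,\rangle\rangle$ together with the generalized Killing equation \eqref{killing equation}. Expanding
\begin{equation*}
\partial_X\xi(Y)=\langle\langle(\nabla_XY)\cdot\varphi,\varphi\rangle\rangle+\langle\langle Y\cdot\nabla_X\varphi,\varphi\rangle\rangle+\langle\langle Y\cdot\varphi,\nabla_X\varphi\rangle\rangle
\end{equation*}
and substituting \eqref{killing equation} yields cross terms of the shape $\tfrac{1}{2}\sum_j\tau[\varphi]\cdot(-Y\cdot e_j\cdot B(X,e_j)+e_j\cdot B(X,e_j)\cdot Y)\cdot[\varphi]$. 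Using that $e_j\in TM$ and $B(X,e_j)\in E$ are orthogonal in $\R^n$ (so $B(X,e_j)$ anticommutes with both $e_j$ and $Y$) together with the Clifford identity $Y\cdot e_j+e_j\cdot Y=-2\langle Y,e_j\rangle$, the bracket collapses to $2\langle Y,e_j\rangle B(X,e_j)$, and summing over $j$ gives exactly $\tilde\xi(B(X,Y))$; the factor $\tfrac{1}{2}$ in \eqref{killing equation} is precisely what makes this work.

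Taking the antisymmetrization of the displayed identity and using the symmetry of $B$ together with the torsion-freeness of $\nabla$ gives $d\xi=0$. Since $M$ is simply connected, $F:=\int\xi$ is a well-defined smooth map $M\to\R^n$; by (b), $dF=\xi|_{TM}$ is a fiberwise isometry, so $F$ is an isometric immersion. Reinterpreting the same displayed identity for this $F$ is exactly the Gauss decomposition into tangential and normal components along the immersion, which identifies $\tilde\xi(E)$ with the normal bundle of $F(M)$ and $B$ with its second fundamental form. The main obstacle throughout is the Clifford algebra manipulation in the key computation: carefully tracking the signs produced by $\tau$ and by the anticommutation of orthogonal Clifford generators, and confirming that the cross terms conspire to reproduce exactly $\tilde\xi(B(X,Y))$.
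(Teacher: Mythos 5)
Your proposal is correct in its key computations and follows the same overall strategy as the paper: $(2)\Rightarrow(1)$ via the Gauss formula \eqref{gauss formula} applied to a constant spinor, and $(1)\Rightarrow(2)$ by showing that $\xi$ is an $\R^n$-valued, closed, isometric $1$-form and integrating. The organisation of the hard direction is, however, genuinely different and arguably cleaner. The paper (Propositions \ref{lem xi closed} and \ref{lem F isometry}) proves closedness by showing that the antisymmetrized integrand $\mathcal{C}$ satisfies $\tau[\mathcal{C}]=-[\mathcal{C}]$, so that $(id+\tau)\langle\langle\varphi,\mathcal{C}\cdot\varphi\rangle\rangle=0$, and then separately recovers the second fundamental form by extracting the normal component of $\partial_X\,\xi(Y)$. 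You instead establish a single covariant identity $\partial_X\,\xi(Y)=\tilde\xi(\nabla_XY)+\tilde\xi(B(X,Y))$, from which $d\xi=0$ follows at once from torsion-freeness and the symmetry of $B$, and which, once $F=\int\xi$ exists, is literally the Gauss formula of the immersion; your cross-term collapse (anticommutation of $B(X,e_j)$ with $e_j$ and $Y$, together with $Y\cdot e_j+e_j\cdot Y=-2\langle Y,e_j\rangle$) is correct, as is the verification that $\tilde\xi$ is a pointwise isometry via $\tilde\xi(V)^2=-|V|^2$. The one point your write-up omits is the verification that the identification of $E$ with the normal bundle of $F(M)$ carries the given connection on $E$ to the normal connection of the immersion; this is part of what ``normal bundle $E$'' means in statement (2), and the paper proves it as the second identity in \eqref{xi preserves ff connection}. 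Your framework handles it by the same computation applied to $N\in\Gamma(E)$: the cross terms now produce $-\tilde\xi(B^*(X,N))$ with $B^*(X,N)=\sum_j\langle B(X,e_j),N\rangle e_j$, giving $\partial_X\,\tilde\xi(N)=\tilde\xi(\nabla'_XN)-\tilde\xi(B^*(X,N))$, i.e.\ the Weingarten formula, whose normal part is exactly the required statement. With that line added, your proof is complete.
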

The representation formula (\ref{def xi}) generalizes the classical Weierstrass representation formula; see Section \ref{section special cases}. 
\begin{rem}
Taking the trace of (\ref{killing equation}) we get
\begin{equation*}
D\varphi=\frac{p}{2}\vec{H}\cdot\varphi
\end{equation*}
where $D\varphi=\sum_{j=1}^pe_j\cdot\nabla_{e_j}\varphi,$ and $\vec{H}=\frac{1}{p}\sum_{j=1}^pB(e_j,e_j)$ is the mean curvature vector of $M$ in $\R^n.$ This Dirac equation is known to be equivalent to (\ref{killing equation}) only for $p=2$ or $3$ (see e.g. \cite{Fr,LR1,LR2,BLR}).
\end{rem}

\begin{proof} 
$2)\Rightarrow 1)$ is a direct consequence of the Gauss formula \eqref{gauss formula} for a submanifold of $\mathbb{R}^n$: 
the restriction of  parallel spinor fields of the ambient space $\mathbb{R}^n$ to the submanifold $M$ are obviously solutions of equation \eqref{killing equation} (recall that in the paper the spinors are constructed with the whole Clifford algebra). The immersion takes the form $F=\int\xi$ where $\xi$ is given by (\ref{def xi}) for the special choice $\varphi={1_{Cl_n}}_{|M},$ since, in that case, for all $X\in TM,$
$$\xi(X)=\tau[\varphi]\ [X]\ [\varphi]=[X]\simeq X$$
where $[\varphi]=\pm 1_{Cl_n}$ and $[X]\in\R^n$ represent $\varphi$ and $X$ in one of the two spinorial frames of $\R^n$ above the canonical basis.\\
$1)\Rightarrow 2)$: 
We will prove that the 1-form $\xi$ defined in \eqref{def xi} gives us indeed an immersion preserving the metric, the second fundamental form and the normal connection.  This follows directly from Propositions \ref{lem xi closed} and \ref{lem F isometry} below.
\end{proof}

\begin{prop} \label{lem xi closed}
Assume that $\varphi\in\Gamma(U\Sigma)$ is a solution of (\ref{killing equation}) and define $\xi$ by (\ref{def xi}). Then
\begin{enumerate}
\item $\xi$ takes its values in $\R^n\subset Cl_n;$
\item $\xi$ is a closed 1-form: $d\xi=0.$
\end{enumerate} 
\end{prop}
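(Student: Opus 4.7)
\emph{Plan.}  For (1), since $\varphi\in\Gamma(U\Sigma)$, the representative $[\varphi]$ in any spinorial frame lies in $\spin(n)\subset Cl_n^0$, and therefore $\tau[\varphi]=[\varphi]^{-1}$ by the relation $\tau(g)g=1$ recalled in the previous subsection.  Thus
$$\xi(X)=\tau[\varphi]\cdot[X]\cdot[\varphi]=[\varphi]^{-1}\cdot[X]\cdot[\varphi]$$
is the standard action of $\spin(n)$ on $\R^n$ by conjugation, which preserves $\R^n\subset Cl_n$ and realizes the double cover $\spin(n)\to SO(n)$.  This expression is frame-independent (as was already used to define (\ref{def brackets 2})), so $\xi(X)\in\R^n$ globally.

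For (2), I would compute $d\xi(X,Y)=X(\xi(Y))-Y(\xi(X))-\xi([X,Y])$ by first evaluating
$$X(\xi(Y))-\xi(\nabla^M_XY),$$
using the compatibility of $\nabla$ with $\langle\langle\cdot,\cdot\rangle\rangle$ established above and the parallelism of Clifford multiplication by tangent vectors.  This reduces to $\langle\langle Y\cdot\nabla_X\varphi,\varphi\rangle\rangle+\langle\langle Y\cdot\varphi,\nabla_X\varphi\rangle\rangle$.  Setting $\omega(X):=\sum_j e_j\cdot B(X,e_j)\in Cl_n$, substituting the generalized Killing equation (\ref{killing equation}), and invoking the straightforward extension of (\ref{scalar product property2}) to arbitrary $a\in Cl_n$, namely $\langle\langle\varphi,a\cdot\psi\rangle\rangle=\langle\langle\tau(a)\cdot\varphi,\psi\rangle\rangle$, the expression becomes
$$X(\xi(Y))-\xi(\nabla^M_XY)=-\tfrac{1}{2}\langle\langle\bigl(Y\cdot\omega(X)+\tau(\omega(X))\cdot Y\bigr)\cdot\varphi,\varphi\rangle\rangle.$$

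The key algebraic observation is $\tau(\omega(X))=-\omega(X)$: each summand $e_j\cdot B(X,e_j)$ is a product of orthogonal vectors in $TM$ and $E$, which anticommute in $Cl_n$, so reversing their order flips the sign.  The displayed identity then collapses to $\tfrac{1}{2}\langle\langle[\omega(X),Y]\cdot\varphi,\varphi\rangle\rangle$.  Expanding the commutator via $[ab,c]=a\{b,c\}-\{a,c\}b$, the term $\{B(X,e_j),Y\}$ vanishes by orthogonality of $TM$ and $E$, while $\{e_j,Y\}$ produces a scalar multiple of $\langle e_j,Y\rangle$; summing over $j$ yields $[\omega(X),Y]=\lambda\,B(X,Y)$ for some nonzero constant $\lambda$ fixed by the Clifford sign convention.

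It follows that $X(\xi(Y))-\xi(\nabla^M_XY)=\tfrac{\lambda}{2}\langle\langle B(X,Y)\cdot\varphi,\varphi\rangle\rangle$, which is symmetric in $(X,Y)$ because $B$ is.  Combined with the torsion-free identity $[X,Y]=\nabla^M_XY-\nabla^M_YX$, the two contributions to $d\xi(X,Y)$ cancel and $d\xi=0$.  The only genuine technical obstacle is careful sign-bookkeeping for $\tau$ and for Clifford anticommutation across the $TM\oplus E$ decomposition; once this is controlled, the closedness of $\xi$ is essentially forced by the symmetry of $B$.
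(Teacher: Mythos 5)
Your proof is correct, and part (2) takes a genuinely different route from the paper's. Part (1) is the paper's argument verbatim: $[\varphi]\in Spin(n)$, so $\xi(X)=\tau[\varphi][X][\varphi]=Ad([\varphi]^{-1})([X])\in\R^n$. For part (2), the paper first antisymmetrizes in $X,Y$, writes $d\xi(X,Y)=(id+\tau)\langle\langle\varphi,\mathcal{C}\cdot\varphi\rangle\rangle$, and shows that once the diagonal terms $\pm B(X,Y)$ cancel by symmetry of $B$, the remaining $\mathcal{C}$ satisfies $\tau\mathcal{C}=-\mathcal{C}$, so $(id+\tau)$ annihilates it. You instead keep $X,Y$ fixed, use $\tau(\omega(X))=-\omega(X)$ to rewrite $\partial_X\xi(Y)-\xi(\nabla_XY)$ as $\tfrac12\langle\langle[\omega(X),Y]\cdot\varphi,\varphi\rangle\rangle$, and evaluate the commutator: with the paper's convention $xy+yx=-2\langle x,y\rangle$ one gets $[\omega(X),Y]=2B(X,Y)$, i.e.\ your $\lambda=2$, so $\partial_X\xi(Y)-\xi(\nabla_XY)=\xi(B(X,Y))$ is symmetric in $(X,Y)$ and $d\xi=0$ follows from the torsion-free identity. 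The two arguments are dual: the paper disposes of the $\tau$-odd part via $(id+\tau)$ and of the $\tau$-even part via the $X\leftrightarrow Y$ antisymmetrization, whereas you extract the $\tau$-even part explicitly as $\xi(B(X,Y))$ and let the antisymmetry of $d$ finish. A concrete payoff of your version is that the intermediate identity $\partial_X\xi(Y)-\xi(\nabla_XY)=\xi(B(X,Y))$ is precisely the statement that the second fundamental form of $F=\int\xi$ is $\xi\circ B$, which the paper must re-derive separately in the proof of Proposition \ref{lem F isometry}; your computation does that work in advance.
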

\begin{proof}
1- By the very definition of $\xi,$ we have
$$\xi(X)=\tau[\varphi][X][\varphi]$$  
for all $X\in TM,$ where $[X]$ and $[\varphi]$ represent $X$ and $\varphi$ in a given frame $\tilde{s}$ of $\tilde{Q}.$ Since $[X]$ belongs to $\R^n\subset Cl_n$ and $[\varphi]$ is an element of $Spin(n),$ $\xi(X)$ belongs to $\R^n.$
\\2- We compute, for $X,Y\in \Gamma(TM)$ such that $\nabla X=\nabla Y=0$ at some point $x_0,$
\begin{eqnarray*}
\partial_X\ \xi(Y)&=&\langle\langle Y\cdot\nabla_X\varphi,\varphi\rangle\rangle+\langle\langle Y\cdot\varphi,\nabla_X\varphi\rangle\rangle\\
&=&(id+\tau)\langle\langle Y\cdot\varphi,\nabla_X\varphi\rangle\rangle\\
&=&(id+\tau)\langle\langle  \varphi,-\frac{1}{2}\sum_{j=1}^pY\cdot e_j\cdot B(X,e_j)\cdot\varphi\rangle\rangle.
\end{eqnarray*}
Hence
\begin{eqnarray*}
d\xi(X,Y)&=&\partial_X\ \xi(Y)-\partial_Y\ \xi(X)\\
&=&(id+\tau)\langle\langle \varphi,\mathcal{C}\cdot\varphi\rangle\rangle
\end{eqnarray*}
with
$$\mathcal{C}:=-\frac{1}{2}\sum_{j=1}^p\left\{Y\cdot e_j\cdot B(X,e_j)-X\cdot e_j\cdot B(Y,e_j)\right\}.$$
Now, for $X=\sum_{1\leq k\leq p} x_k e_k$ and $Y=\sum_{1\leq k\leq p} y_k e_k,$ 
$$\sum_{j=1}^pX\cdot e_j\cdot B(Y,e_j)=-B(Y,X)+\sum_{j=1}^p\sum_{k\neq j}x_ke_k\cdot e_j\cdot B(Y,e_j)$$
and
$$\sum_{j=1}^pY\cdot e_j\cdot B(X,e_j)=-B(X,Y)+\sum_{j=1}^p\sum_{k\neq j}y_ke_k\cdot e_j\cdot B(X,e_j),$$
which yields the formula
$$\mathcal{C}=-\frac{1}{2}\sum_{j=1}^p\sum_{k\neq j}e_k\cdot e_j\cdot(y_kB(X,e_j)-x_kB(Y,e_j)).$$
This shows that $\tau[\mathcal{C}]=-[\mathcal{C}],$ which implies that
$$\tau\langle\langle \varphi,\mathcal{C}\cdot\varphi\rangle\rangle=\tau(\tau[\varphi]\tau[\mathcal{C}][\varphi])=-\tau[\varphi]\tau[\mathcal{C}][\varphi]=-\langle\langle \varphi,\mathcal{C}\cdot\varphi\rangle\rangle.$$
Thus
$$d\xi(X,Y)=(id+\tau)\langle\langle \varphi,\mathcal{C}\cdot\varphi\rangle\rangle=0.$$
\end{proof}
We keep the notation of Proposition \ref{lem xi closed}, and moreover assume that $M$ is simply connected; since $\xi$ is closed by Proposition \ref{lem xi closed}  we can consider 
$$F:M\rightarrow\R^n$$ 
such that $dF=\xi.$ The next proposition follows from the properties of the Clifford product:
\begin{prop}\label{lem F isometry}
1. The map $F:M\rightarrow \R^n$ is an isometry.
\\2. The map
\begin{eqnarray*}
\Phi_E:\hspace{1cm} E&\rightarrow& M\times\R^n\\
X\in E_m&\mapsto& (F(m),\xi(X)) 
\end{eqnarray*}
is an isometry between $E$ and the normal  bundle of $F(M)$ into $\R^n,$ preserving connections and second fundamental forms. Here, for $X\in E,$ $\xi(X)$ still stands for the quantity $\langle\langle X\cdot\varphi,\varphi\rangle\rangle.$
\end{prop}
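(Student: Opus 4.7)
The plan rests on one structural observation. Since $\varphi \in \Gamma(U\Sigma)$, its representative $[\varphi] \in Cl_n$ in any spinorial frame lies in $\spin(n)$, so $\tau[\varphi]\,[\varphi] = 1$. Consequently, for any $v \in T_mM \oplus E_m$ with Clifford expression $[v] \in \R^n$, the formula $\xi(v) = \tau[\varphi]\,[v]\,[\varphi] = [\varphi]^{-1}[v][\varphi]$ is the inverse twisted adjoint action of $[\varphi]$ on $\R^n$, i.e.\ an orthogonal transformation. Hence at every $m \in M$ the extended map $\xi_m: T_mM \oplus E_m \to \R^n$ is a linear isometry. Statement 1 follows immediately since $dF = \xi|_{TM}$ is pointwise a linear isometry. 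Statement 2, at the level of fibers, also follows: since $\xi_m$ is an orthogonal transformation of $\R^n$ sending $T_mM$ onto $dF(T_mM) = T_{F(m)}F(M)$, its restriction $\Phi_E$ must send the orthogonal complement $E_m$ isometrically onto $(T_{F(m)}F(M))^{\perp}$, the normal space to $F(M)$ at $F(m)$.

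To check that $\Phi_E$ preserves connections and transports $B$ to the second fundamental form of $F$, I would fix $m \in M$, choose a local frame $\{e_j\}$ with $\nabla e_j = 0$ at $m$, and compute $\partial_X \xi(Z)$ at $m$ for $Z$ either a tangent field with $\nabla Z = 0$ at $m$ or a normal section with $\nabla^E Z = 0$ at $m$. Exactly as in the proof of Proposition \ref{lem xi closed}, substitution of the Killing equation reduces this to
$$\partial_X \xi(Z) \;=\; (id + \tau)\langle\langle \varphi, Z \cdot \nabla_X \varphi \rangle\rangle \;=\; \tau[\varphi]\,\bigl(\mathcal{A} + \tau\mathcal{A}\bigr)\,[\varphi],$$
with $\mathcal{A} := -\tfrac{1}{2}\sum_j [Z]\cdot e_j \cdot B(X,e_j) \in Cl_n$, so everything reduces to evaluating $\mathcal{A}+\tau\mathcal{A}$ in the two cases.

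For $Z = Y \in TM$, writing $Y = \sum_k y_k e_k$, the diagonal terms $j=k$ produce a contribution $\tfrac{1}{2} B(X,Y) \in \R^n$, which is $\tau$-invariant, while the off-diagonal terms $e_k \cdot e_j \cdot B(X,e_j)$ with $j \neq k$ involve three mutually orthogonal vectors and are $\tau$-antisymmetric, hence killed by $id + \tau$. Thus $\partial_X \xi(Y) = \xi(B(X,Y))$ is a normal vector to $F(M)$; combined with the Gauss formula for the isometric immersion $F$, this identifies the second fundamental form of $F(M)$ evaluated at $(dF(X), dF(Y))$ with $\Phi_E(B(X,Y))$. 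For $Z = \nu \in E$, the vectors $\nu$ and $B(X,e_j)$ both lie in $E$, so their anticommutator contributes a scalar $-2\langle \nu, B(X,e_j)\rangle$; a short computation yields $\mathcal{A}+\tau\mathcal{A} = -\sum_j \langle \nu, B(X,e_j)\rangle\, e_j = -A_\nu X \in TM$, where $A_\nu$ is the shape operator associated to $B$ and $\nu$. Hence $\partial_X \xi(\nu) = -\xi(A_\nu X)$ is tangent to $F(M)$: this is the Weingarten formula, and it shows simultaneously that the normal component of $\partial_X \xi(\nu)$ vanishes (so $\Phi_E$ preserves the normal connection) and that the shape operator of $F(M)$ in the direction $\xi(\nu)$ coincides with $A_\nu$. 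The only genuine technical obstacle is this evaluation of $\mathcal{A}+\tau\mathcal{A}$, which uses only the Clifford anticommutation relations and parallels the calculation already used to prove $d\xi = 0$ in Proposition \ref{lem xi closed}.
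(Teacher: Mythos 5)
Your proposal is correct and, for the substantive parts (second fundamental form and normal connection), follows essentially the same computation as the paper: reduce $\partial_X\xi(Z)$ via the Killing equation to $(id+\tau)$ applied to a Clifford expression, and sort the terms by their behaviour under $\tau$ to extract $\xi(B(X,Y))$ in the tangent case and $-\xi(B^*(X,N))$ in the normal case. The one genuine difference is in the metric statement: the paper computes the anticommutator $\xi(X)\xi(Y)+\xi(Y)\xi(X)=\tau[\varphi]\left([X][Y]+[Y][X]\right)[\varphi]=-2\langle X,Y\rangle$ directly for $X,Y\in TM\oplus E$, whereas you observe that $[\varphi]\in Spin(n)$ forces $\xi_m=\mathrm{Ad}([\varphi]^{-1})\in SO(n)$ (composed with the frame identification), so that $\xi_m$ is a linear isometry of $T_mM\oplus E_m$ onto $\R^n$. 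The two arguments rest on the same fact ($\tau[\varphi][\varphi]=1$) and are equivalent, but your packaging is slightly more conceptual and makes it immediate that $E_m$ is carried onto the orthogonal complement of $dF(T_mM)$, i.e.\ onto the normal space of the immersion, which the paper instead deduces from the same anticommutator identity applied to mixed pairs.
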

\begin{proof}
For $X,Y\in\Gamma(TM\oplus E),$ we have
\begin{eqnarray*}
\langle \xi(X),\xi(Y)\rangle &=&-\frac{1}{2}\left(\xi(X)\xi(Y)+\xi(Y)\xi(X)\right)\\
&=&-\frac{1}{2}\left(\tau[\varphi][X][\varphi]\tau[\varphi][Y][\varphi]+\tau[\varphi][Y][\varphi]\tau[\varphi][X][\varphi]\right)\\
&=&-\frac{1}{2}\tau[\varphi]\left([X][Y]+[Y][X]\right)[\varphi]\\
&=&\langle X,Y\rangle,
\end{eqnarray*}
since $[X][Y]+[Y][X]=-2\langle[X],[Y]\rangle=-2\langle X,Y\rangle.$ This implies that $F$ is an isometry, and that $\Phi_E$ is a bundle map between $E$ and the normal bundle of $F(M)$ into $\R^n$ which preserves the metrics of the fibers. Let us denote by $B_F$ and $\nabla'^F$ the second fundamental form and the normal connection of the immersion $F;$ we want to show that
\begin{equation}\label{xi preserves ff connection}
\xi(B(X,Y))=B_F(\xi(X),\xi(Y))\hspace{.5cm}\mbox{and}\hspace{.5cm}\xi(\nabla'_XN)=\nabla'^F_{\xi(X)}\xi(N)
\end{equation}
for $X,Y\in \Gamma(TM)$ and $N\in\Gamma(E).$ First,
$$B_F(\xi(X),\xi(Y))=\left\{\partial_X\ \xi(Y)\right\}^N$$
where the superscript $N$ means that we consider the component of the vector which is normal to the immersion. We showed in the proof of Proposition \ref{lem xi closed} that fixing a point $x_0\in M,$ and assuming that $\nabla Y=0$ at $x_0$ we have 
\begin{eqnarray*}
\partial_X\ \xi(Y)=-\frac{1}{2}(id+\tau)\langle\langle \varphi,Y\cdot\sum_{j=1}^pe_j\cdot B(X,e_j)\cdot\varphi\rangle\rangle.
\end{eqnarray*}  
and that moreover 
$$Y\cdot\sum_{j=1}^pe_j\cdot B(X,e_j)=-B(X,Y)+\mathcal{D}$$
where $\mathcal{D}$ is a term which satisfies $\tau\mathcal{D}=-\mathcal{D}.$ This implies that
\begin{eqnarray*}
B_F(\xi(X),\xi(Y))&=&\left\{\frac{1}{2}(id+\tau)\langle\langle \varphi,B(X,Y)\cdot\varphi\rangle\rangle\right\}^N\\
&=&\xi(B(X,Y)),
\end{eqnarray*}
where the last equality holds since $\tau[B(X,Y)]=[B(X,Y)]$ and $\xi(B(X,Y))$ is normal to the immersion. We finally show the second identity in (\ref{xi preserves ff connection}): we have
\begin{eqnarray*}
\nabla'^F_{\xi(X)}\xi(N)&=&(\partial_X\ \xi(N))^N\\
&=&\langle\langle\nabla'_XN\cdot\varphi,\varphi\rangle\rangle^N+\langle\langle N\cdot\nabla_X\varphi,\varphi\rangle\rangle^N+\langle\langle N\cdot\varphi,\nabla_X\varphi\rangle\rangle^N.
\end{eqnarray*}
The first term in the right hand side is $\xi(\nabla'_XN),$ and we only need to show that
\begin{equation}\label{lem F isometry expr}
\langle\langle N\cdot\nabla_X\varphi,\varphi\rangle\rangle^N+\langle\langle N\cdot\varphi,\nabla_X\varphi\rangle\rangle^N=0.
\end{equation}
We have
\begin{eqnarray*}
\langle\langle N\cdot\nabla_X\varphi,\varphi\rangle\rangle+\langle\langle N\cdot\varphi,\nabla_X\varphi\rangle\rangle&=&(id+\tau)\langle\langle N\cdot\nabla_X\varphi,\varphi\rangle\rangle\\
&=&\frac{1}{2}(id+\tau)\langle\langle \sum_{j=1}^p e_j\cdot N\cdot B(X,e_j)\cdot\varphi,\varphi\rangle\rangle,
\end{eqnarray*}
and the identity (\ref{lem F isometry expr}) will thus be proved if we show that this vector is tangent to the immersion. We have 
\begin{eqnarray*}
\sum_{j=1}^p e_j\cdot N\cdot B(X,e_j)&=&-\sum_{j=1}^p e_j\cdot B(X,e_j)\cdot N-2\sum_{j=1}^p \langle B(X,e_j), N\rangle \ e_j\\
&=&-\sum_{j=1}^p B(X,e_j)\cdot N\cdot e_j-2B^*(X,N)\\
&=&-\tau \left(\sum_{j=1}^p e_j\cdot N\cdot B(X,e_j)\right)-2B^*(X,N)
\end{eqnarray*}
where we have set $B^*(X,N)=\sum_{j=1}^p \langle B(X,e_j), N\rangle \ e_j;$ thus
$$\frac{1}{2}(id+\tau)\langle\langle \sum_{j=1}^p e_j\cdot N\cdot B(X,e_j)\cdot\varphi,\varphi\rangle\rangle=-\langle\langle B^*(X,N)\cdot\varphi,\varphi\rangle\rangle,$$
which is a vector tangent to the immersion since $B^*(X,N)$ belongs to $TM;$ (\ref{lem F isometry expr}) follows, which finishes the proof. \end{proof}
\begin{rem}\label{rmk congruence}
The group $Spin(n)$ naturally acts on $U\Sigma$ by multiplication on the right, and if $\varphi\in\Gamma(U\Sigma)$ is a solution of (\ref{killing equation}) and $g_0$ belongs to $Spin(n),$ then $\varphi\cdot g_0$ is also a solution of (\ref{killing equation}); in fact, $\varphi\cdot g_0$ defines an immersion which is congruent to the immersion defined by $\varphi:$ indeed, for all $X\in\Gamma(TM),$
\begin{eqnarray*}
\xi_{\varphi\cdot g_0}(X)&=&\tau[\varphi\cdot g_0][X][\varphi\cdot g_0]\\
&=&\tau(g_0)\tau[\varphi][X][\varphi]g_0\\
&=&\tau(g_0)\xi_{\varphi}(X)g_0,
\end{eqnarray*}
i.e.
$$\xi_{\varphi.g_0}\ =Ad({g_0}^{-1})\circ\xi_{\varphi};$$
the linear part of the rigid motion between the immersions defined by $\varphi$ and $\varphi\cdot g_0$ is thus $Ad({g_0}^{-1})\in SO(n).$
\end{rem}
\section{An application: the Fundamental Theorem of Submanifold Theory}\label{section fundamental theorem}
 We first recall the equations of Gauss, Ricci and Codazzi for the symmetric bilinear form $B$. Let $R^T$ and $R^N$ stand respectively for the curvature tensors of the connections on $TM$ and on $E$. Further, let $B^*:TM\times E\rightarrow TM$ be the bilinear map such that for all $X,Y\in\Gamma(TM)$ and $N\in\Gamma(E)$
$$\langle B(X,Y),N\rangle =\langle Y,B^*(X,N)\rangle,$$
then we have, for all $X,Y,Z\in\Gamma(TM)$ and $N\in\Gamma(E),$
\begin{enumerate}
\item the Gauss equation
$$R^T(X,Y)Z=B^*(X,B(Y,Z))-B^*(Y,B(X,Z)),$$
\item the Ricci equation
$$R^N(X,Y)N=B(X,B^*(Y,N))-B(Y,B^*(X,N)),$$
\item the Codazzi equation
$$\tilde{\nabla}_X B(Y,Z)=\tilde{\nabla}_Y B(X,Z);$$
\end{enumerate}
in the last equation, $\tilde{\nabla}$ denotes the natural connection on $T^*M\otimes T^*M\otimes E.$ 
\begin{prop}\label{GRC_as_int_conditions}
The equations of Gauss, Ricci and Codazzi on $B$ are the integrability conditions of (\ref{killing equation}).
\end{prop}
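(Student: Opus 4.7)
The plan is to recast \eqref{killing equation} as the parallelism equation $\tilde{\nabla}\varphi=0$ for the modified connection
$$\tilde{\nabla}_X\varphi := \nabla_X\varphi + \Omega(X)\cdot\varphi, \qquad \Omega(X) := \tfrac{1}{2}\sum_{j=1}^p e_j\cdot B(X,e_j),$$
on $\Sigma$.  Since $\varphi$ takes values pointwise in $U\Sigma\cong\mathrm{Spin}(n)\subset Cl_n$, whose elements are units of $Cl_n$, left Clifford multiplication on $\varphi$ by any nonzero element of $Cl_n$ is nonzero; hence local solvability of \eqref{killing equation} is equivalent to $R^{\tilde{\nabla}}\equiv 0$.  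Standard bookkeeping gives
$$R^{\tilde{\nabla}}(X,Y) = R^\nabla(X,Y) + \bigl((\nabla_X\Omega)(Y)-(\nabla_Y\Omega)(X)\bigr)\cdot + [\Omega(X),\Omega(Y)]\cdot,$$
and the task reduces to identifying each of these three summands with a piece of $\Lambda^2(\R^p\oplus\R^q)\subset Cl_n$.

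By Remark \ref{id sigma tensor product}, $R^\nabla$ acts on $\Sigma$ by Clifford multiplication by the bivector
$$\tfrac14\sum_{i,j=1}^p\langle R^T(X,Y)e_i,e_j\rangle\,e_i\cdot e_j + \tfrac14\sum_{\alpha,\beta=1}^q\langle R^N(X,Y)e_{p+\alpha},e_{p+\beta}\rangle\,e_{p+\alpha}\cdot e_{p+\beta},$$
which lies in $\Lambda^2(\R^p)\oplus\Lambda^2(\R^q)$.  At a point where the frames are $\nabla$-parallel, differentiating $\Omega$ gives
$$(\nabla_X\Omega)(Y)-(\nabla_Y\Omega)(X) = \tfrac12\sum_{j=1}^p e_j\cdot\bigl[(\tilde{\nabla}_XB)(Y,e_j)-(\tilde{\nabla}_YB)(X,e_j)\bigr],$$
a mixed tangent-normal bivector in $\R^p\wedge\R^q$.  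Finally, $\Omega(X)=\tfrac12\sum_{j,\alpha}B_\alpha(X,e_j)\,e_j\cdot e_{p+\alpha}$ is itself a mixed bivector, and the elementary Clifford bracket
$$[e_i\cdot e_{p+\alpha},\,e_j\cdot e_{p+\beta}] = 2\delta_{\alpha\beta}\,e_i\wedge e_j + 2\delta_{ij}\,e_{p+\alpha}\wedge e_{p+\beta}$$
(the mixed and degree-four contributions vanish by orthogonality of $\R^p$ and $\R^q$; the scalar contributions cancel by cyclicity of the Clifford trace) exhibits $[\Omega(X),\Omega(Y)]$ as a sum of a pure tangent bivector
$$\tfrac14\sum_{i,j}\bigl[\langle B(X,e_i),B(Y,e_j)\rangle - \langle B(X,e_j),B(Y,e_i)\rangle\bigr]\,e_i\wedge e_j$$
plus its normal analogue, with no mixed component.

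Summing the three pieces, $R^{\tilde{\nabla}}(X,Y)$ is a bivector whose components in the linearly independent subspaces $\Lambda^2(\R^p)$, $\Lambda^2(\R^q)$ and $\R^p\wedge\R^q$ are, respectively: the Gauss defect $\langle R^T(X,Y)e_i,e_j\rangle - \langle B(Y,e_i),B(X,e_j)\rangle + \langle B(X,e_i),B(Y,e_j)\rangle$, the analogous Ricci defect, and the Codazzi defect $(\tilde{\nabla}_XB)(Y,\cdot)-(\tilde{\nabla}_YB)(X,\cdot)$.  The simultaneous vanishing of these three components is thus equivalent to $R^{\tilde{\nabla}}=0$, which in turn is equivalent to the local existence of a solution $\varphi\in\Gamma(U\Sigma)$ of \eqref{killing equation}.

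The main technical obstacle is the verification that the Clifford bracket $[\Omega(X),\Omega(Y)]$ produces neither a scalar nor a mixed-bivector contribution, and that its tangent (respectively normal) coefficient, combined with the $R^T$ (respectively $R^N$) piece of $R^\nabla$, reproduces exactly the right-hand side of the classical Gauss (respectively Ricci) equation.  This decoupling of the three classical identities in the spinorial formulation relies crucially on the orthogonality of the tangent and normal factors in $\R^n=\R^p\oplus\R^q$.
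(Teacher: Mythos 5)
Your proposal is correct and is essentially the paper's own argument: both compute the curvature of the modified connection $\nabla+\Omega(\cdot)\cdot$ and identify its components in $\Lambda^2(\R^p)$, $\Lambda^2(\R^q)$ and $\R^p\wedge\R^q$ with the Gauss, Ricci and Codazzi defects, using the invertibility of $[\varphi]\in Spin(n)$ to pass from $R^{\tilde\nabla}(X,Y)\varphi=0$ to the vanishing of the bivector. The one point you gloss over is the converse implication ``flat $\Rightarrow$ a parallel section \emph{in $U\Sigma$} exists'': one must note, as the paper does, that the correction term is left multiplication by an element of $\Lambda^2(TM\oplus E)$, hence defines vertical $Spin(n)$-invariant vector fields on $U\Sigma$, so that $\tilde\nabla$ is a principal connection on $U\Sigma$ and its parallel sections starting in $U\Sigma$ stay there.
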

\begin{proof}
We assume that $\varphi\in\Gamma(U\Sigma)$ is a solution of (\ref{killing equation}) and compute the curvature 
\begin{equation*}
R(X,Y)\varphi=\nabla_X\nabla_Y\varphi-\nabla_Y\nabla_X\varphi-\nabla_{[X,Y]}\varphi.
\end{equation*}
We fix a point $x_0\in M,$ and assume that $\nabla X=\nabla Y=0$ at $x_0.$ We have
\begin{eqnarray*}
\nabla_X\nabla_Y\varphi&=&-\frac{1}{2}\sum_{j=1}^pe_j\cdot\left(\tilde{\nabla}_X B(Y,e_j)\cdot\varphi+B(Y,e_j)\cdot\nabla_X\varphi\right)\\
&=&-\frac{1}{2}\sum_{j=1}^pe_j\cdot\tilde{\nabla}_X B(Y,e_j)\cdot\varphi-\frac{1}{4}\sum_{j,k=1}^pe_j\cdot e_k\cdot B(Y,e_j)\cdot B(X,e_k).
\end{eqnarray*}
Thus
\begin{eqnarray}
R(X,Y)\varphi&=&-\frac{1}{2}\sum_{j=1}^pe_j\cdot\left(\tilde{\nabla}_X B(Y,e_j)-\tilde{\nabla}_Y B(X,e_j)\right)\cdot\varphi\nonumber\\
&&+\frac{1}{4}\sum_{j\neq k} e_j\cdot e_k\cdot \left(B(X,e_j)\cdot B(Y,e_k)-B(Y,e_j)\cdot B(X,e_k)\right)\cdot\varphi\label{R function B}\\
&&-\frac{1}{4}\sum_{j=1}^p\left(B(X,e_j)\cdot B(Y,e_j)-B(Y,e_j)\cdot B(X,e_j)\right)\cdot\varphi.\nonumber
\end{eqnarray}
We compute the last two terms in the following lemma:
\begin{lem} \label{computation AB} Let us set
$$\mathcal{A}:=\frac{1}{4}\sum_{j\neq k} e_j\cdot e_k\cdot \left(B(X,e_j)\cdot B(Y,e_k)-B(Y,e_j)\cdot B(X,e_k)\right)$$
and
$$\mathcal{B}:=-\frac{1}{4}\sum_{j=1}^p\left(B(X,e_j)\cdot B(Y,e_j)-B(Y,e_j)\cdot B(X,e_j)\right).$$
We have
$$\mathcal{A}=\frac{1}{2}\sum_{j< k}\left\{\langle B^*(X,B(Y,e_j)),e_k\rangle- \langle B^*(Y,B(X,e_j)),e_k\rangle\right\}e_j\cdot e_k$$
and
$$\mathcal{B}=\frac{1}{2}\sum_{k<l}\left\langle B(X,B^*(Y,n_k))-B(Y,B^*(X,n_k)),n_l\right\rangle n_k\cdot n_l.$$
Here $e_1,\ldots,e_p$ and $n_1,\ldots,n_q$ are orthonormal basis of $T_{x_o}M$ and $E_{x_o},$ respectively.
\end{lem}
\begin{proof}
For the computation of $\mathcal{A},$ we notice that 
$$\sum_{j\neq k} e_j\cdot e_k\cdot B(Y,e_j)\cdot B(X,e_k)=-\sum_{j\neq k} e_j\cdot e_k\cdot B(Y,e_k)\cdot B(X,e_j),$$
and get
\begin{eqnarray*}
\mathcal{A}&=&\frac{1}{4}\sum_{j\neq k} e_j\cdot e_k\cdot \left(B(X,e_j)\cdot B(Y,e_k)+B(Y,e_k)\cdot B(X,e_j)\right)\\
&=&-\frac{1}{2}\sum_{j\neq k} \langle B(X,e_j),B(Y,e_k)\rangle e_j\cdot e_k\\
&=&-\frac{1}{2}\sum_{j< k}  \left\{\langle B(X,e_j),B(Y,e_k)\rangle-\langle B(Y,e_j),B(X,e_k)\rangle\right\}e_j\cdot e_k\\
&=&-\frac{1}{2}\sum_{j< k}\left\{\langle B^*(Y,B(X,e_j)),e_k\rangle- \langle B^*(X,B(Y,e_j)),e_k\rangle\right\}e_j\cdot e_k.
\end{eqnarray*}
For the computation of $\mathcal{B},$ we write
$$B(Y,e_j)=\sum_k\langle B(Y,e_j),n_k\rangle n_k\hspace{.5cm}\mbox{and}\hspace{.5cm}B(X,e_j)=\sum_l\langle B(X,e_j),n_l\rangle n_l$$
and get
\begin{eqnarray*}
\sum_j B(Y,e_j)\cdot B(X,e_j)&=&\sum_{kl}\sum_j\langle B(Y,e_j),n_k\rangle\langle B(X,e_j),n_l\rangle n_k\cdot n_l\\
&=&\sum_{kl}\sum_j\langle e_j,B^*(Y,n_k)\rangle\langle e_j,B^*(X,n_l)\rangle n_k\cdot n_l\\
&=&\sum_{kl}\langle B^*(Y,n_k),B^*(X,n_l)\rangle n_k\cdot n_l\\
&=&\sum_{kl}\langle B(X,B^*(Y,n_k)),n_l\rangle n_k\cdot n_l;
\end{eqnarray*}
thus
\begin{eqnarray*}
\mathcal{B}&=&\frac{1}{4}\sum_{kl}\left\langle B(X,B^*(Y,n_k))-B(Y,B^*(X,n_k)),n_l\right\rangle n_k\cdot n_l\\
&=&\frac{1}{2}\sum_{k<l}\left\langle B(X,B^*(Y,n_k))-B(Y,B^*(X,n_k)),n_l\right\rangle n_k\cdot n_l.
\end{eqnarray*}
\end{proof}
On the other hand, the curvature of the spinorial connection is given by
\begin{eqnarray}
R(X,Y)\varphi&=&\frac{1}{2}\left(\sum_{1\leq j<k\leq p}\langle R^T(X,Y)(e_j),e_k\rangle\ e_j\cdot e_k\right.\label{R function RT RN}\\
&&+\left.\sum_{1\leq k<l\leq q}\langle R^N(X,Y)(n_k),n_l\rangle\ n_k\cdot n_l\right)\cdot\varphi.\nonumber
\end{eqnarray}
We now compare the expressions (\ref{R function B}) and (\ref{R function RT RN}) using the calculations in Lemma \ref{computation AB}: since in a given frame $\tilde{s}$ belonging to $\tilde{Q},$ $\varphi$ is represented by an element which is invertible in $Cl_n$ (it is in fact represented by an element belonging to $Spin(n)$), we may identify the coefficients and get
$$\langle R^T(X,Y)(ej),e_k\rangle =\langle B^*(X,B(Y,e_j)),e_k\rangle- \langle B^*(Y,B(X,e_j)),e_k\rangle,$$
$$\langle R^N(X,Y)(n_k),n_l\rangle=\langle B(X,B^*(Y,n_k)),n_l\rangle-\langle B(Y,B^*(X,n_k)),n_l\rangle$$
and
$$\tilde{\nabla}_X B(Y,e_j)-\tilde{\nabla}_Y B(X,e_j)=0$$
for all the indices. These equations are the equations of Gauss, Ricci and Codazzi.
\\

We finally show that the equations of Gauss, Codazzi and Ricci are also sufficient to get a solution of (\ref{killing equation}): by the computation above, the connection on $\Sigma$ defined by
\begin{equation}\label{def nabla prime}
\nabla'_X\varphi:=\nabla_X\varphi+\frac{1}{2}\sum_{j=1}^pe_j\cdot B(X,e_j)\cdot\varphi
\end{equation}
for all $\varphi\in\Gamma(\Sigma)$ and $X\in\Gamma(TM)$ is then a flat connection. Moreover, this connection may be regarded as a connection on the principal bundle $U\Sigma$ (with the group $Spin(n)$ acting from the right): indeed, $\nabla$ defines such a connection (since it comes from a connection on $\tilde{Q}$ and by (\ref{def usigma})), and the right hand side term in (\ref{def nabla prime}) defines a linear map
\begin{eqnarray*}
TM&\rightarrow&\chi^{inv}_V(U\Sigma)\\
X&\mapsto&\varphi\mapsto \frac{1}{2}\sum_{j=1}^pe_j\cdot B(X,e_j)\cdot\varphi
\end{eqnarray*}
from $TM$ to the vector fields $\chi^{inv}_V(U\Sigma)$ on $U\Sigma$ which are vertical and invariant under the action of the group (these vector fields are indeed of the form $\varphi\mapsto\eta\cdot\varphi,$ $\eta\in\Lambda^2(TM\oplus E)\subset Cl(TM\oplus E)).$ Since a flat connection on a principal bundle admits a local parallel section, there exists a local section $\varphi\in \Gamma(U\Sigma)$ such that $\nabla'\varphi=0,$ and thus a solution of (\ref{killing equation}).
\end{proof}
As a consequence of Theorem \ref{thm main result} and Proposition \ref{GRC_as_int_conditions} we therefore get immediately   
\begin{cor}[Fundamental Theorem of Submanifold Theory]
We keep the hypotheses and notation of Section \ref{sec preliminaries}, and moreover assume that $M$ is simply connected and that $B:TM\times TM\rightarrow E$ is bilinear, symmetric and satisfies the equations of Gauss, Codazzi and Ricci. Then there exists an isometric immersion of $M$ into $\R^n$ with normal bundle $E$ and second fundamental form $B.$ The immersion is unique up to a rigid motion in $\R^n.$
\end{cor}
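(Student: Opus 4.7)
The plan is to combine Proposition~\ref{GRC_as_int_conditions} with Theorem~\ref{thm main result}, using simple connectedness to pass from local to global. By (the proof of) Proposition~\ref{GRC_as_int_conditions}, the Gauss, Codazzi and Ricci equations on $B$ are precisely the condition that the connection
$$\nabla'_X\varphi := \nabla_X\varphi + \frac{1}{2}\sum_{j=1}^p e_j\cdot B(X,e_j)\cdot\varphi$$
on the $\spin(n)$-principal bundle $U\Sigma$ is flat.

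First I would exploit the simple connectedness of $M$: the holonomy of a flat connection on a principal bundle over a simply connected base is trivial, so $\nabla'$ admits a global parallel section $\varphi \in \Gamma(U\Sigma)$. By construction this $\varphi$ solves the generalized Killing equation~\eqref{killing equation}, and Theorem~\ref{thm main result} (the implication $1)\Rightarrow 2)$) then produces an isometric immersion $F: M\rightarrow \R^n$ with normal bundle $E$ and second fundamental form $B$, obtained as the antiderivative of the closed $\R^n$-valued 1-form $\xi$ defined by~\eqref{def xi}. This settles the existence statement.

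For uniqueness, suppose $F_1, F_2: M\rightarrow \R^n$ are two such immersions. By the implication $2)\Rightarrow 1)$ of Theorem~\ref{thm main result}, each one produces a $\nabla'$-parallel section of $U\Sigma$, namely $\varphi_1$ and $\varphi_2$. The right multiplication action of $\spin(n)$ on $U\Sigma$ commutes with $\nabla'$ (since the correction term acts by left Clifford multiplication, which commutes with right multiplication), so the space of global $\nabla'$-parallel sections is a $\spin(n)$-torsor. Hence $\varphi_2 = \varphi_1\cdot g_0$ for some $g_0 \in \spin(n)$, and Remark~\ref{rmk congruence} yields $\xi_{\varphi_2} = \mathrm{Ad}(g_0^{-1})\circ \xi_{\varphi_1}$. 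Integrating, $F_2$ and $\mathrm{Ad}(g_0^{-1})\circ F_1$ differ by a constant vector in $\R^n$, so $F_1$ and $F_2$ are related by a rigid motion.

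There is essentially no real obstacle: every substantive ingredient has been developed above. The only fresh input is the standard fact that a flat connection on a principal bundle over a simply connected base admits a global parallel section (and that such sections form a single orbit under the structure group), which is used once to produce $\varphi$ from the integrability conditions and again to identify the space of solutions as a $\spin(n)$-orbit in the uniqueness argument.
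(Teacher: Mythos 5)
Your proof is correct and follows essentially the same route as the paper: Gauss--Codazzi--Ricci give flatness of $\nabla'$ (Proposition \ref{GRC_as_int_conditions}), simple connectedness yields a global parallel section of $U\Sigma$ solving \eqref{killing equation}, Theorem \ref{thm main result} produces $F=\int\xi$, and uniqueness follows from the $Spin(n)$-torsor structure on parallel sections together with Remark \ref{rmk congruence}. If anything, you are slightly more careful than the paper in making explicit the passage from the local parallel section of the flat connection to a global one via trivial holonomy.
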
 
\begin{proof}
As proved in Proposition \ref{GRC_as_int_conditions}, the equations of Gauss, Codazzi and Ricci are exactly the integrability conditions of (\ref{killing equation}). By Theorem \ref{thm main result}, with a solution $\varphi\in \Gamma(U\Sigma)$ of equation (\ref{killing equation}) at hand, $F=\int\xi$, where $\xi$ is the 1-form defined in (\ref{def xi}) is the immersion. Finally, a solution of (\ref{killing equation}) is unique up to the multiplication on the right by an element of $Spin(n)$ (since this is a parallel section of the $Spin(n)$ principal bundle $U\Sigma$, see the proof of Proposition \ref{GRC_as_int_conditions}); the multiplication on the right of $\varphi$ by an element of $Spin(n)$ and the adding of a constant vector in $\R^n$ in the last integration give an immersion which is congruent to the immersion defined by $\varphi$ (see Remark \ref{rmk congruence}).
\end{proof}
\section{Relation to the Gauss map}
We show here that the spinor field representing the immersion is an horizontal lift of the Gauss map.  Let us consider the Grassmannian $Gr_{p,n}\subset \Lambda^p(\mathbb{R}^n)$ of the oriented $p$-dimensional linear spaces in $\R^n$. Using the natural isomorphism of vector spaces between the exterior algebra over $\mathbb{R}^n$ and $Cl_n$, it identifies with the set
$$\mathcal{Q}_o=\{e_{1}\cdot e_2\cdots e_p\in Cl_n,\ e_i\in\R^n,\ |e_i|=1,\ e_i\perp e_j,\ i,j=1,\ldots,p,\ i\neq j\}.$$
We recall that for an oriented $p$-dimensional submanifold $F:M\rightarrow\mathbb{R}^n$ the Gauss map is defined as the map which assigns each point $x\in M$ to the oriented tangent space $dF(T_xM)$ considered as a vector subspace of $\mathbb{R}^n$. It can hence be seen as the map into the Grassmannian
\begin{eqnarray*}
G:\hspace{1cm} M&\rightarrow& \mathcal{Q}_o\\
x&\mapsto& dF(e_{1})\cdot dF(e_2)\cdots dF(e_p),
\end{eqnarray*} 
where $e_{1},e_2,\ldots,e_p$ is a positively oriented orthonormal basis of $T_xM.$ 

We assume that the immersion $F$ of $M$ into $\R^n$ is given by a spinor field $\varphi,$ as in Theorem \ref{thm main result}.
\begin{prop}
The spinor field $\varphi$, which is a section of $U\Sigma$, is a lift of the Gauss map: the diagram 
$$\xymatrix{
  &U\Sigma \ar[d]_{\chi}\\
   M\ar@/^/[ur]^\varphi\ar[r]_{\overline{G}} & M\times\mathcal{Q}_o
  }$$
commutes, where $\overline{G}(x)=(x,G(x))$ and the projection $U\Sigma\rightarrow M\times \mathcal{Q}_o$ is given by 
\begin{equation}\label{def p}
\chi:\hspace{.5cm} \varphi\in U\Sigma_x\hspace{.3cm}\mapsto\hspace{.3cm} \left(x,\langle\langle\omega\cdot\varphi,\varphi\rangle\rangle\right),
\end{equation}
where $\omega$ is the volume form in $Cl(T_xM)$ (the product of the elements of a positively oriented orthonormal basis of $T_xM$).

It is moreover parallel with respect to the connection 
$$\nabla'_X\varphi:=\nabla_X\varphi+\frac{1}{2}\sum_{j=1}^pe_j\cdot B(X,e_j)\cdot\varphi$$
on $U\Sigma$.
\end{prop}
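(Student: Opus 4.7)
The plan is to verify the two assertions separately: first the commutativity of the diagram (an algebraic identity in $Cl_n$), and second the parallelism with respect to $\nabla'$ (which is essentially a tautology once one inspects the defining equation).

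For the commutativity, I would use the explicit formula for the immersion from Theorem \ref{thm main result}, namely $dF(X) = \xi(X) = \tau[\varphi]\, [X]\, [\varphi]$ in a spinorial frame $\tilde{s}\in \tilde{Q}$. Evaluating the Gauss map, for a positively oriented orthonormal basis $e_1,\ldots,e_p$ of $T_xM$, one gets
\[
G(x) \;=\; dF(e_1)\cdot dF(e_2)\cdots dF(e_p) \;=\; \tau[\varphi]\,[e_1]\,[\varphi]\,\tau[\varphi]\,[e_2]\,[\varphi]\cdots \tau[\varphi]\,[e_p]\,[\varphi].
\]
The key observation is that $[\varphi]\in \spin(n)$, hence $\tau[\varphi]\,[\varphi] = [\varphi]\,\tau[\varphi] = 1$; each of the internal pairs $[\varphi]\,\tau[\varphi]$ collapses, telescoping the product to
\[
G(x) \;=\; \tau[\varphi]\,[e_1]\,[e_2]\cdots[e_p]\,[\varphi] \;=\; \tau[\varphi]\,[\omega]\,[\varphi] \;=\; \langle\langle \omega\cdot\varphi,\varphi\rangle\rangle,
\]
which by the definition \eqref{def p} of $\chi$ is exactly the second component of $\chi(\varphi(x))$. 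Since the first component is trivially $x$, the diagram commutes.

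For the second claim, I would simply substitute: by the very definition of $\nabla'$ we have
\[
\nabla'_X\varphi \;=\; \nabla_X\varphi + \frac{1}{2}\sum_{j=1}^p e_j\cdot B(X,e_j)\cdot\varphi,
\]
and the generalized Killing equation \eqref{killing equation} asserts exactly that the right-hand side vanishes. So $\nabla'\varphi = 0$.

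There is no real obstacle: the only non-routine step is recognizing the telescoping pattern $\tau[\varphi]\,[\varphi]=1$, which is valid precisely because our spinor field takes values in $U\Sigma$ (i.e.\ in $\spin(n)$ fiberwise) rather than in the full bundle $\Sigma$. This is precisely why the restriction to $U\Sigma$ was built into the main theorem, and is also the geometric reason that the Gauss map lifts to $U\Sigma$ rather than merely to $\Sigma$.
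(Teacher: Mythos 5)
Your proof is correct and follows essentially the same route as the paper: both arguments hinge on the identity $\tau[\varphi]\,[\varphi]=1$ for $[\varphi]\in\spin(n)$, which the paper uses to expand $\tau[\psi][e_1]\cdots[e_p][\psi]$ into the telescoped product $(\tau[\psi][e_1][\psi])\cdots(\tau[\psi][e_p][\psi])=\xi(e_1)\cdots\xi(e_p)=G(x)$ — exactly your computation read in the other direction — and both dispose of the parallelism claim by direct substitution of the generalized Killing equation. The only point the paper makes that you leave implicit is that the same telescoping shows $\chi$ is well defined as a map into $M\times\mathcal{Q}_o$ for an arbitrary element of $U\Sigma$, not just for the particular section $\varphi$.
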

\begin{proof}
We first explain why the map $\chi$ as defined indeed has target $M \times \mathcal{Q}_o$.
Consider the map
\begin{eqnarray*}
\Xi:\Sigma \times_M Cl(TM) &\rightarrow& M\times Cl_n\\
                            (\psi, c) &\mapsto& \langle\langle c\cdot\psi, \psi\rangle\rangle=:\Xi_{\psi}(c).
\end{eqnarray*}
Suppose $\psi \in U \Sigma$ and $c = e_1 \cdots e_k$ for $k$ orthonormal vectors $e_1, \ldots, e_k \in T_x M$. Then, we can rewrite $\Xi_{\psi}(c) = \langle\langle c\cdot\psi, \psi\rangle\rangle$ in any spinorial frame at $x$ as
\begin{equation}\label{gaussform}
\tau [\psi] [e_1] \cdots [e_k] [\psi] = (\tau [\psi] [e_1] [\psi]) (\tau [\psi] [e_2] [\psi]) \cdots (\tau [\psi] [e_k] [\psi]).
\end{equation}
The $k$ vectors on the right-hand side are still orthonormal, so $\Xi_{\psi}(c)$ lies in the corresponding Grassmannian $Gr_{k,n}$. Consequently $\chi(\psi) = \Xi_\psi(\omega)$ lies in $M\times \mathcal{Q}_o$.

We next verify the formula for the Gauss map. Recall that the immersion is given by $F=\int\xi,$ where $\xi$ is the 1-form defined by $\xi(X)=\langle\langle X\cdot\varphi,\varphi\rangle\rangle$ for all $X\in TM.$ Thus, $dF=\xi$. We fix a positively oriented and orthonormal frame $(e_1,\ldots,e_p)$ of $TM,$ and a spinorial frame $\tilde{s}\in\tilde{Q}$ which is above $(e_1,\ldots,e_p).$ 
Then, $\omega = e_1 \cdots e_p$.  In any spinorial frame, $\tau [\varphi] [v] [\varphi] = \xi(v)$ for all $v \in T_x M$. Therefore
\eqref{gaussform} yields that
$\chi(\varphi) = \xi(e_1) \xi(e_2) \cdots \xi(e_p) = G(x)$. This proves the first part of the proposition.

Finally, $\varphi$ is horizontal with respect to the connection $\nabla'$ since it is a solution of (\ref{killing equation}).
\end{proof}

\section{Special cases: minimal surfaces, hypersurfaces, and surfaces in $\R^4$}\label{section special cases}
\subsection{Minimal surfaces in $\R^n$}
If $J$ denotes the natural complex structure on $M,$ the 1-form
$$\tilde{\xi}(X):=\xi(X)-i\xi(JX),\hspace{1cm}X\in TM,$$
is $\C$-linear, with values in the complexified Clifford algebra $\tilde{Cl_n}=Cl_n\oplus i Cl_n;$ in general
$$F=\int\Re e\ \tilde{\xi}=\int\Re e\left(\tilde{f}(z)dz\right)$$
where $z$ is a complex parameter of $M$ and $\tilde{f}$ is a smooth function. Note that $\tilde{\xi}$ and $\tilde{f}$ take in fact their values in $\C^n:=\R^n\oplus i\R^n\subset\tilde{Cl_n}.$
\begin{prop}
The form $\tilde{\xi}$ is closed (and thus holomorphic) if and only if $\vec{H}=0.$ In that case, we have
$$F=\Re e\int\tilde{f}(z)dz$$
where $\tilde{f}$ is a holomorphic function.
\end{prop}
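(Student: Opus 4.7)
The plan is to proceed in three steps: first show that $\tilde{\xi}$ is automatically of type $(1,0)$; second, compute $d\tilde{\xi}$ and identify its value as a multiple of $\xi(\vec{H})$; third, translate closedness of a $(1,0)$-form into holomorphicity of its coefficient in a local conformal parameter.

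The first step is immediate from $J^{2}=-\iid$: $\tilde{\xi}(JX)=\xi(JX)+i\xi(X)=i\,\tilde{\xi}(X)$, so $\tilde{\xi}$ is $\C$-linear for $J$, independently of the mean curvature.

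For the second step, since $d\xi=0$ by Proposition \ref{lem xi closed}, at a point $x_{0}$ where $\nabla X=\nabla Y=0$ the exterior derivative reduces to $d\tilde{\xi}(X,Y)=-i\bigl(\partial_{X}\xi(JY)-\partial_{Y}\xi(JX)\bigr)$. I would use here that on an oriented Riemannian surface the almost complex structure $J$ is parallel, so $\nabla(JX)=\nabla(JY)=0$ at $x_{0}$ as well. The Clifford computation performed in the proof of Proposition \ref{lem F isometry} (the $\iid+\tau$ symmetrization killing the $\mathcal{D}$-term) yields the sharper identity $\partial_{X}\xi(Y)=\xi(B(X,Y))$ at $x_{0}$. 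Specializing to the oriented orthonormal frame $(e_{1},e_{2})$ with $e_{2}=Je_{1}$ parallel at $x_{0}$, this gives
$$d\tilde{\xi}(e_{1},e_{2})=-i\bigl(\xi(B(e_{1},-e_{1}))-\xi(B(e_{2},e_{2}))\bigr)=i\,\xi\bigl(B(e_{1},e_{1})+B(e_{2},e_{2})\bigr)=2i\,\xi(\vec{H}).$$
Since $\xi$ restricts to an isometry on $E$ by Proposition \ref{lem F isometry}, I conclude that $d\tilde{\xi}=0$ if and only if $\vec{H}=0$.

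For the third step, in a local conformal parameter $z$ the type $(1,0)$ property lets us write $\tilde{\xi}=\tilde{f}(z)\,dz$ for some $\C^{n}$-valued function $\tilde{f}$; then $d\tilde{\xi}=\partial_{\bar{z}}\tilde{f}\,d\bar{z}\wedge dz$, so $d\tilde{\xi}=0$ is exactly the holomorphicity of $\tilde{f}$. Combining with $dF=\xi=\pre\tilde{\xi}$ yields $F=\pre\int\tilde{f}(z)\,dz$, as claimed. The one delicate point is the identity $\partial_{X}\xi(Y)=\xi(B(X,Y))$ at a point where $\nabla Y=0$: one needs to check carefully that after the $\iid+\tau$ symmetrization the Clifford expression really reduces to $\xi(B(X,Y))$ with no remaining tangential piece. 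Once this is in hand, the dimension-$2$ combinatorics and the conformal-coordinate step are routine.
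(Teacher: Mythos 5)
Your argument is correct and takes essentially the same route as the paper: both compute $d\tilde\xi(e_1,e_2)$ in a frame parallel at a point, identify the result with $2i\,\xi(\vec H)$, and conclude from the injectivity of $\xi$ on $E$. The step you flag as delicate --- that $\partial_X\xi(Y)=\xi(B(X,Y))$ at a point where $\nabla Y=0$, with no residual tangential piece --- does hold, since in the proof of Proposition~\ref{lem F isometry} the term $\mathcal D$ satisfies $\tau\mathcal D=-\mathcal D$ and is annihilated by $(\mathrm{id}+\tau)$, leaving exactly $\frac{1}{2}(\mathrm{id}+\tau)\langle\langle\varphi,B(X,Y)\cdot\varphi\rangle\rangle=\xi(B(X,Y))$; the paper instead reaches $2i\,\xi(\vec H)$ by inserting the Dirac equation $D\varphi=\vec H\cdot\varphi$ and a $\tau$-parity argument, but the content is the same.
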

\begin{proof}
We assume that $(e_1,e_2)$ is a local orthonormal frame on $M,$ positively oriented, such that $\nabla e_1=\nabla e_2=0$ at a point $x_0.$ We thus have
$$d\tilde\xi(e_1,e_2)=\partial_{e_1}\left(\xi(e_2)+i\xi(e_1)\right)-\partial_{e_2}\left(\xi(e_1)-i\xi(e_2)\right).$$
Noticing that, for $j,k\in\{1,2\},$ 
\begin{eqnarray*}
\partial_{e_j}\left(\xi(e_k)\right)&=&\partial_{e_j}\langle\langle e_k\cdot\varphi,\varphi\rangle\rangle\\
&=&\langle\langle e_k\cdot\nabla_{e_j}\varphi,\varphi\rangle\rangle+\langle\langle e_k\cdot\varphi,\nabla_{e_j}\varphi\rangle\rangle\\
&=&(id+\tau)\langle\langle e_k\cdot\nabla_{e_j}\varphi,\varphi\rangle\rangle,
\end{eqnarray*}
we obtain
\begin{eqnarray*}
d\tilde\xi(e_1,e_2)&=&i\ (id+\tau)\langle\langle e_1\cdot\nabla_{e_1}\varphi+e_2\cdot\nabla_{e_2}\varphi,\varphi\rangle\rangle\\&&+(id+\tau)\langle\langle e_2\cdot\nabla_{e_1}\varphi-e_1\cdot\nabla_{e_2}\varphi,\varphi\rangle\rangle.
\end{eqnarray*}
The first term on the right hand side is
\begin{eqnarray*}
i\ (id+\tau)\langle\langle e_1\cdot\nabla_{e_1}\varphi+e_2\cdot\nabla_{e_2}\varphi,\varphi\rangle\rangle&=&i\ (id+\tau)\langle\langle \vec H\cdot\varphi,\varphi\rangle\rangle\\
&=&2i\ \langle\langle \vec H\cdot\varphi,\varphi\rangle\rangle
\end{eqnarray*}
since, by (\ref{killing equation}), 
$$D\varphi:=e_1\cdot\nabla_{e_1}\varphi+e_2\cdot\nabla_{e_2}\varphi=\vec H\cdot\varphi$$
and $\tau[\vec H]=[\vec H].$ The second term is
\begin{eqnarray*}
(id+\tau)\langle\langle e_2\cdot\nabla_{e_1}\varphi-e_1\cdot\nabla_{e_2}\varphi,\varphi\rangle\rangle&=&-(id+\tau)\langle\langle e_1\cdot\nabla_{e_1}\varphi+e_2\cdot\nabla_{e_2}\varphi,e_1\cdot e_2\cdot\varphi\rangle\rangle\\
&=&-(id+\tau)\langle\langle \vec H\cdot \varphi,e_1\cdot e_2\cdot\varphi\rangle\rangle\\
&=&0,
\end{eqnarray*}
using again that $D\varphi=\vec H\cdot\varphi$ and since $\tau\left([\vec H][e_1][e_2]\right)=-[\vec H][e_1][e_2].$ We thus obtain the formula
$$d\tilde\xi(e_1,e_2)=2i\ \langle\langle \vec H\cdot\varphi,\varphi\rangle\rangle$$
which may be written in the form
\begin{equation}\label{dxi tilde}
d\tilde\xi=-\mu^2\ \langle\langle\vec H\cdot\varphi,\varphi\rangle\rangle\ dz\wedge d\overline{z}
\end{equation}
where $\mu$ is such that the metric is $\mu^2 dzd\overline{z}.$ This gives the first part of the lemma. Assuming that $\vec H=0,$ the 1-form $\tilde \xi$ is closed, and the $\C^n-$valued function $\tilde f$ such that $\tilde\xi=\tilde f dz$ is holomorphic; the result follows.
\end{proof}
The aim now is to obtain explicit formulas in terms of holomorphic functions involving the components of the spinor field. We first note the following expression of $\tilde{f}$ in terms of the spinor field $\varphi:$ 
\begin{lem}\label{lem expr f tilde}
We have
$$\tilde{f}=\mu\left\{\tau[\varphi]e_{1}^o[\varphi]-i\tau[\varphi]e_{2}^o[\varphi]\right\}$$
where the real function $\mu$ is such that the metric is 
$$\mu^2(dx^2+dy^2)$$
in $z=x+iy,$ $[\varphi]$ represents the spinor field $\varphi$ in a spinorial frame above $(\frac{1}{\mu}\partial_x,\frac{1}{\mu}\partial_y),$ and $e_1^o,e_2^o$ are the first two vectors of the canonical basis of $\R^n\subset Cl_n.$
\end{lem}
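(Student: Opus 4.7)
The plan is to simply evaluate the relation $\tilde{\xi} = \tilde{f}(z)\,dz$ on the vector field $\partial_x$, which will isolate $\tilde{f}$ on the left-hand side, and then unwind the definitions of $\tilde{\xi}$ and $\xi$ in the spinorial frame indicated by the statement. This is a direct computation; I do not expect any real obstacle.

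First I would write $\tilde{f} = \tilde{\xi}(\partial_x)$, using that $dz(\partial_x) = 1$. Then, unraveling the definition $\tilde{\xi}(X) = \xi(X) - i\xi(JX)$ and using that $J\partial_x = \partial_y$ (since $J$ is the almost complex structure compatible with the isothermal coordinate $z = x+iy$ and the metric $\mu^2(dx^2+dy^2)$), this becomes
\[
\tilde{f} = \xi(\partial_x) - i\,\xi(\partial_y).
\]

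Next, since the chosen frame is $e_1 = \tfrac{1}{\mu}\partial_x$ and $e_2 = \tfrac{1}{\mu}\partial_y$, I rewrite $\partial_x = \mu e_1$, $\partial_y = \mu e_2$, and use the $\R$-linearity of $\xi$ to pull the factor $\mu$ out:
\[
\tilde{f} = \mu\bigl(\xi(e_1) - i\,\xi(e_2)\bigr).
\]

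Finally, I apply the definition $\xi(X) = \langle\langle X\cdot\varphi,\varphi\rangle\rangle = \tau[\varphi]\,[X]\,[\varphi]$ in a spinorial frame $\tilde{s}\in\tilde{Q}$ lying above $(e_1,e_2)$. In such a frame, by construction, $e_1$ and $e_2$ are represented by the first two vectors $e_1^o$ and $e_2^o$ of the canonical basis of $\R^n\subset Cl_n$. Substituting $[e_1] = e_1^o$ and $[e_2] = e_2^o$ yields exactly the claimed formula
\[
\tilde{f} = \mu\bigl\{\tau[\varphi]\,e_1^o\,[\varphi] - i\,\tau[\varphi]\,e_2^o\,[\varphi]\bigr\},
\]
completing the proof.
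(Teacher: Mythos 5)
Your proof is correct and follows exactly the paper's own argument: evaluate $\tilde{\xi}$ on $\partial_x$, use $J\partial_x=\partial_y$, rescale by $\mu$ to pass to the orthonormal frame $(\frac{1}{\mu}\partial_x,\frac{1}{\mu}\partial_y)$, and read off $[e_1]=e_1^o$, $[e_2]=e_2^o$ in the chosen spinorial frame. No issues.
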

\begin{proof}
We have
$$\tilde{f}=\tilde{\xi}\left(\partial_x\right)=\tau[\varphi][\partial_x][\varphi]-i\tau[\varphi][\partial_y][\varphi]$$
and the result follows since $[\frac{1}{\mu}\partial_x]=e_{1}^o$ and $[\frac{1}{\mu}\partial_y]=e_2^o$ in such a spinorial frame. 
\end{proof}
\subsubsection{Minimal surfaces in $\R^3$} Assuming that $n=3$ and $H=0,$ we easily get by a computation using Lemma \ref{lem expr f tilde} that
$$F=\int\Re e(\tilde{f}(z)dz)=\Re e(\int \tilde{f}(z)dz)$$
where $ \tilde{f}=\left(i\frac{f}{2}(1+g^2),\frac{f}{2}(1-g^2),fg\right)$ with 
$$f=2\mu z_1^2,\hspace{1cm}g=-i\frac{\overline{z_2}}{z_1};$$  
the complex functions $z_1,$ $z_2$ are the components of $\varphi$ in a spinorial frame above $(\frac{1}{\mu}\partial_x,\frac{1}{\mu}\partial_y),$ and the functions $f$ and $g$ are holomorphic, since so are $\sqrt{\mu}z_1$ and  $\sqrt{\mu}\overline{z_2}$ (this is a consequence of the Dirac equation $D\varphi=0,$ in $z=x+iy$). This is the classical Weierstrass representation of minimal surfaces in $\R^3.$
\subsubsection{Minimal surfaces in $\R^4$} In the case of a surface in $\R^4,$ we may also recover the explicit formulas of Konopelchenko and Landolfi \cite{KL} expressing a general immersion in terms of 4 complex functions, which are solutions of first order PDE's; the functions are holomorphic if $\vec{H}=0$. We do not include the calculations, since the general representation in Theorem \ref{thm main result} easily reduces to the spinor representation given in \cite{BLR} if $p=2$ and $n=4$ (see Section \ref{sec surface R4}), and the equivalence of this representation with the Konopelchenko-Landolfi representation is proved in \cite{RR}.
\begin{rem}
For surfaces in $\R^n,$ $n\geq 5,$ it is still possible to obtain an explicit representation in terms of the components of the spinor field which represents the surface, with holomorphic datas if $\vec H=0,$ if the bundle $E$ is assumed to be flat. We do not know if such a representation is possible without this additional assumption.   
\end{rem}

\subsection{Hypersurfaces in $\R^n$}
We set $p=n-1,$ and assume that $M$ is a $p$-dimensional Riemannian manifold and $E$ is the trivial line bundle on $M$, oriented by a unit section $\nu\in \Gamma(E).$ We moreover suppose that $M$ is simply connected and that $h:TM\times TM\rightarrow \R$ is a given symmetric bilinear form. According to Theorem \ref{thm main result}, an isometric immersion of $M$ into $\R^{p+1}$ with normal bundle $E$ and second fundamental form $B=h\nu$ is equivalent to a section $\varphi$ of $\Gamma(U\Sigma)$ solution of the Killing equation (\ref{killing equation}). Note that $Q_E\simeq M$ and the double covering
$$\tilde{Q}_E\rightarrow Q_{E}$$
is trivial, since $M$ is assumed to be simply connected. Fixing a section $\tilde{s}_E$ of $\tilde{Q}_E$ we get an injective map
\begin{eqnarray*}
\tilde{Q}_M&\rightarrow &\tilde{Q}_M\times_M\tilde{Q}_E=:\tilde{Q}\\
\tilde{s}_M&\mapsto&(\tilde{s}_M,\tilde{s}_E).
\end{eqnarray*}
Using
$$Cl_p\simeq Cl^0_{p+1}\subset Cl_{p+1}$$
(induced by the Clifford map $\R^p\rightarrow Cl_{p+1},$ $X\mapsto X\cdot e_{p+1})$,  we deduce a bundle isomorphism
\begin{eqnarray}
\tilde{Q}_M\times_{\rho} Cl_p&\rightarrow& \tilde{Q}\times_{\rho} Cl^0_{p+1}\hspace{.3cm} \subset\Sigma\label{identif spineurs}\\
\psi&\mapsto&\psi^*.\nonumber
\end{eqnarray}
It satisfies the following properties: for all $X\in TM$ and $\psi\in \tilde{Q}_M\times_{\rho} Cl_p,$
$$(X\cdot_M\psi)^*=X\cdot\nu\cdot\psi^*$$
and
$$\nabla_X(\psi^*)=(\nabla_X\psi)^*.$$
The section $\varphi\in \Gamma(U\Sigma)$ solution of (\ref{killing equation}) thus identifies to a section $\psi$ of $\tilde{Q}_M\times_{\rho} Cl_p$ solution of
\begin{equation*}
\nabla_X\psi=-\frac{1}{2}\sum_{j=1}^ph(X,e_j)e_j\cdot_M\psi=-\frac{1}{2}T(X)\cdot_M\psi
\end{equation*}
for all $X\in TM,$ where $T:TM\rightarrow TM$ is the symmetric operator associated to $h.$ We deduce the following result:
\begin{thm}\label{thm hypersurfaces}
Let $T:TM\rightarrow TM$ be a symmetric operator. The following two statements are equivalent:
\begin{enumerate}
\item there exists an isometric immersion of $M$ into $\R^{p+1}$ with shape operator $T;$
\item there exists a normalized spinor field $\psi\in \Gamma(\tilde Q_M\times_{\rho} Cl_p)$ solution of \begin{equation}\label{equation psi}
\nabla_X\psi=-\frac{1}{2}T(X)\cdot_M\psi
\end{equation}
for all $X\in TM.$
\end{enumerate}
Here, a spinor field $\psi\in \Gamma(\tilde Q_M\times_{\rho} Cl_p)$ is said to be normalized if it is represented in some frame $\tilde{s}\in\tilde{Q}_M$ by an element $[\psi]\in Cl_p\simeq Cl_{p+1}^0$ belonging to $Spin(p+1).$
\end{thm}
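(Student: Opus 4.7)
The plan is to reduce Theorem~\ref{thm hypersurfaces} directly to the main result Theorem~\ref{thm main result} via the bundle isomorphism (\ref{identif spineurs}). All the essential algebraic and differential compatibility properties of the map $\psi\mapsto\psi^*$ have been assembled in the paragraph preceding the statement; what remains is to combine them into a formal equivalence.

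First I would apply Theorem~\ref{thm main result} in codimension $q=1$, with ambient dimension $n=p+1$, $E$ the trivial oriented line bundle spanned by $\nu$, and $B = h\,\nu$. This identifies statement (1) with the existence of a section $\varphi \in \Gamma(U\Sigma)$ satisfying the generalized Killing equation (\ref{killing equation}), whose right-hand side simplifies here to $-\frac{1}{2}\sum_{j=1}^p h(X,e_j)\, e_j\cdot\nu\cdot\varphi$ since $B(X,e_j)=h(X,e_j)\,\nu$.

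Next, writing $\varphi = \psi^*$ via (\ref{identif spineurs}) and invoking the two listed properties $(X\cdot_M\psi)^* = X\cdot\nu\cdot\psi^*$ and $\nabla_X(\psi^*) = (\nabla_X\psi)^*$, I would translate the Killing equation for $\varphi$ into $(\nabla_X\psi)^* = -\frac{1}{2}(T(X)\cdot_M\psi)^*$, using that $\sum_j h(X,e_j)\,e_j = T(X)$; by fiberwise injectivity of $\psi\mapsto\psi^*$ this is equivalent to (\ref{equation psi}). Finally I would check that the normalization conditions match: $\varphi\in\Gamma(U\Sigma)$ amounts to $[\varphi]\in Spin(p+1)\subset Cl^0_{p+1}$ in some (and hence every) spinorial frame, which under the algebra isomorphism $Cl_p\simeq Cl^0_{p+1}$ corresponds precisely to $[\psi]\in Spin(p+1)$, i.e.\ the normalization condition in the theorem statement.

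I do not anticipate a serious obstacle; the one point needing explicit attention is that the identification (\ref{identif spineurs}) maps only onto the even subbundle $\tilde Q\times_\rho Cl^0_{p+1}\subset\Sigma$, not the whole of $\Sigma$. This causes no loss, because $U\Sigma\subset \tilde Q\times_\rho Cl^0_{p+1}$ (since $Spin(p+1)\subset Cl^0_{p+1}$), so every solution $\varphi\in\Gamma(U\Sigma)$ of (\ref{killing equation}) furnished by Theorem~\ref{thm main result} automatically lies in the image of $\psi\mapsto\psi^*$. Hence the correspondence between normalized $\psi$ and sections $\varphi\in\Gamma(U\Sigma)$ is bijective, and both directions of the desired equivalence follow at once.
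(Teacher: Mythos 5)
Your proposal is correct and takes essentially the same route as the paper: the authors also deduce the theorem by specializing Theorem~\ref{thm main result} to $q=1$ with $E$ the trivial line bundle and $B=h\nu$, and then transporting the Killing equation through the identification (\ref{identif spineurs}) using the two compatibility properties $(X\cdot_M\psi)^*=X\cdot\nu\cdot\psi^*$ and $\nabla_X(\psi^*)=(\nabla_X\psi)^*$. Your explicit remark that $U\Sigma\subset\tilde Q\times_\rho Cl^0_{p+1}$, so that every solution $\varphi\in\Gamma(U\Sigma)$ lies in the image of $\psi\mapsto\psi^*$, is a point the paper leaves implicit, but it is the same argument.
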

We will see below explicit representation formulas in the cases of the dimensions 3 and 4.
\subsubsection{Surfaces in $\R^3$} Since $Cl_2\simeq\Sigma_2$ we have
$$\tilde{Q}_M\times_{\rho} Cl_2\simeq\Sigma M,$$
and $\varphi$ is equivalent to a normalized spinor field $\psi\in\Gamma(\Sigma M)$ solution of 
$$\nabla_X\psi=-\frac{1}{2}T(X)\cdot_M\psi$$
for all $X\in TM;$ this equation is also equivalent to the equation $D\psi=H\psi.$ This is the result obtained by Friedrich in \cite{Fr}. 

We now write the representation formula (\ref{def xi}) using a special model for $Cl_3,$ and indicate how to recover the representation formula given in \cite{Fr}. We first consider the Clifford map
$$(x_1,x_2,x_3)\in\R^3\mapsto \left(\begin{array}{cc}x&0\\0&-x\end{array}\right)\in \HH(2),$$
where $x=-ix_3+j(x_1+ix_2),$ which identifies $Cl_3$ to the set
$$\left\{\left(\begin{array}{cc}x&0\\0&y\end{array}\right),\ x,y\in\HH\right\}$$
and $\R^3\subset Cl_3$ to the set of the imaginary quaternions; we also consider the ideal of $Cl_3$
\begin{equation}\label{def sigma3}
\Sigma_3=\left\{\left(\begin{array}{cc}y&0\\0&0\end{array}\right),\ y\in\HH\right\}\ \subset Cl_3,
\end{equation}
which is a model of the spin representation. Now $\varphi,$ section of $U\Sigma=\tilde{Q}\times_{\rho} Spin(3),$ is equivalent to a unit spinor field $\varphi'\in\Gamma(\tilde{Q}\times_{\rho}\Sigma_3)$ (obtained by projection) and a direct computation yields
\begin{equation}\label{rep dim 3}
\langle\langle X\cdot\varphi,\varphi\rangle\rangle=i\ \mathcal{I}m\ \langle X\cdot\varphi',\varphi'\rangle+j\langle X\cdot\varphi',\alpha(\varphi')\rangle
\end{equation}
for all $X\in TM,$ where the brackets $\langle.,.\rangle$ stand for the natural hermitian product on $\Sigma_3$ and $\alpha:\Sigma_3\rightarrow\Sigma_3$ is the natural quaternionic structure. The representation formula given by the right hand side term of (\ref{rep dim 3}) appears in \cite{Fr}. Finally, the identification (\ref{identif spineurs}) for the dimension $p=2$
\begin{eqnarray*}
\tilde{Q}_M\times_{\rho} Cl_2&\rightarrow& \tilde{Q}\times_{\rho} Cl^0_{3}\hspace{.3cm} \subset\Sigma\\
\psi&\mapsto&\psi^*
\end{eqnarray*}
identifies $\varphi\in\Gamma(U\Sigma)$ to a unit spinor field $\psi\in\Gamma(\Sigma M),$ and it may be proved by a computation that
$$\langle\langle X\cdot\varphi,\varphi\rangle\rangle=i2\mathcal{R}e\langle X\cdot\psi^+,\psi^-\rangle+j\left(\langle X\cdot\psi^+,\alpha(\psi^+)\rangle-\langle X\cdot\psi^-,\alpha(\psi^-)\rangle\right)$$
where the brackets $\langle.,.\rangle$ stand here for the natural hermitian product on $\Sigma_2$ and $\alpha:\Sigma_2\rightarrow\Sigma_2$ is the natural quaternionic structure; this is the explicit formula of the immersion in terms of $\psi$ given in \cite{Fr}.

\subsubsection{Hypersurfaces in $\R^4$} Since $Cl_3\simeq\Sigma_3\oplus\Sigma'_3$ where $\Sigma_3$ and $\Sigma'_3$ are the two (non-equivalent) irreducible representations of $Cl_3,$ we get two unit spinor fields $\psi_1\in \Gamma(\Sigma M),$ $\psi_2\in \Gamma(\Sigma' M)$ solutions of (\ref{equation psi}). Noting finally that there is a natural identification
$$i:\Sigma' M\rightarrow\Sigma M$$
satisfying 
$$i(X\cdot \psi)=-X\cdot i(\psi)$$
for all $X\in TM$ and $\psi\in\Sigma'M,$ the spinor fields $\psi_1$ and $i(\psi_2)\in\Gamma(\Sigma M)$ satisfy  
\begin{equation}\label{equations hyp R4}
\nabla_X\psi_1=-\frac{1}{2}T(X)\cdot_M\psi_1\hspace{.5cm} \mbox{and}\hspace{.5cm} \nabla_Xi(\psi_2)=\frac{1}{2}T(X)\cdot_Mi(\psi_2).
\end{equation}
We thus recover a result of \cite{LR1}: the immersion is equivalent to two spinor fields on the hypersurface which are solutions of (\ref{equations hyp R4}). We may also obtain a new explicit representation formula. On one hand, we note that
\begin{equation}\label{ident1 hyp R4}
\langle\langle X\cdot\varphi,\varphi\rangle\rangle=\left(\begin{array}{cc}0&\overline{\xi_1}x\xi_2\\ \overline{\xi_2}x\xi_1&0\end{array}\right)
\end{equation}
in $Cl^0_4,$ where $\varphi\in\Gamma(U\Sigma)$ and $X\in TM$ are respectively represented in $Cl_4^0$ by 
$$\left(\begin{array}{cc}\xi_1&0\\0&\xi_2\end{array}\right)\hspace{.5cm}\mbox{and}\hspace{.5cm}\left(\begin{array}{cc}0&x\\x&0\end{array}\right),$$
with $\xi_1,\xi_2\in\HH$ and $x\in\Im m\ \HH.$ On the other hand, $\Sigma_3$ naturally identifies to $\HH$ (see (\ref{def sigma3})) and the bilinear map 
\begin{eqnarray*}
\Sigma_3\times \Sigma_3&\rightarrow&\HH\\
(\xi,\xi')&\mapsto&\overline{\xi'}\xi
\end{eqnarray*}
induces a pairing
$$\langle\langle.,.\rangle\rangle_{\Sigma M}:\hspace{1cm}\Sigma M\times\Sigma M\rightarrow\HH$$ 
on $\Sigma M=\tilde{Q}_M\times_{\rho}\Sigma_3.$ If
$$\psi=\psi_1+\psi_2,\hspace{1cm} \psi_1\in\Sigma M,\ \psi_2\in \Sigma'M$$
is such that $\varphi=\psi^*$ (by (\ref{identif spineurs}), with $p=3$), the spinor fields $\psi_1$ and $i(\psi_2)\in \Sigma M$ are respectively represented by $\xi_1$ and $\xi_2,$ and we readily get
\begin{equation}\label{ident2 hyp R4}
\langle\langle X\cdot_M i(\psi_2),\psi_1\rangle\rangle_{\Sigma M}=\overline{\xi_1}x\xi_2.
\end{equation}
The identities (\ref{ident1 hyp R4}) and (\ref{ident2 hyp R4}) identify
$$\langle\langle X\cdot\varphi,\varphi\rangle\rangle\simeq \langle\langle X\cdot_M i(\psi_2),\psi_1\rangle\rangle_{\Sigma M};$$
this gives an explicit representation of the immersion into $\R^4$ in terms of the two spinor fields $\psi_1$ and $i(\psi_2)$ of $\Sigma M$ introduced in \cite{LR1}.
\subsection{Surfaces in $\R^4$}\label{sec surface R4}
For a surface in $\R^4$, Theorem \ref{thm main result} with $p=2$ and $n=4$ reduces to the result obtained in \cite{BLR}, since the bundle $\Sigma$ naturally identifies to the bundle $\Sigma M\otimes\Sigma E$ in that case (see Remark \ref{id sigma tensor product}, observing that the representation of $Spin(2)$ on $Cl_2$ by left multiplication is also the usual complex spin representation $\Sigma_2$). Note that we may similarly recover the main results in \cite{Bay,BP} concerning immersions in $\R^{3,1}$ and $\R^{2,2},$ if we consider in our constructions the Clifford algebras $Cl_{3,1}$ and $Cl_{2,2}$ instead of $Cl_4.$ 
\section{Spinorial representation of submanifolds in $S^n$ and $\HH^n$}
We extend here Theorem \ref{thm main result} to the other space forms.
\subsection{Submanifolds of $S^n$}
Let $M$ be a Riemannian manifold of dimension $p,$ and $E$ be a bundle on $M$ of rank $q=n-p,$ with a fibre metric and a compatible connection; we assume that $TM$ and $E$ are spin, and consider
$$\Sigma:=\tilde{Q}\times_{\rho} Cl_{n+1}$$
where $\tilde{Q}=\tilde{Q}_M\times_M \tilde{Q}_E$ is the $Spin(p)\times Spin(q)$ principal bundle given by the two spin structures and $\rho:Spin(p)\times Spin(q)\rightarrow Aut(Cl_{n+1})$ is the representation obtained by the composition of the maps 
\begin{equation}\label{spin_pq_in_spin_n+1}
Spin(p)\times Spin(q)\rightarrow Spin(n)\subset Spin(n+1)
\end{equation}
and 
\begin{equation}\label{spin_n+1 in Cl_n+1}
Spin(n+1)\rightarrow Aut(Cl_{n+1}).
\end{equation}
The maps in (\ref{spin_pq_in_spin_n+1}) correspond to the decompositions
$$\R^p\oplus\R^q=:\R^n\hspace{.5cm}\subset \R^n\oplus\R e_{n+1}=:\R^{n+1},$$ and in (\ref{spin_n+1 in Cl_n+1}) the action of $Spin(n+1)$ on $Cl_{n+1}$ is the multiplication on the left. We also define
$$U\Sigma=\tilde{Q}\times_{\rho} Spin(n+1)\ \subset \Sigma.$$
Let us denote by $\nu$ the element of the Clifford bundle $\tilde{Q}\times_{Ad} Cl_{n+1}$ such that its component in an arbitrary frame $\tilde{s}\in \tilde{Q}$ is the constant vector $e_{n+1}$ (note that for all $g\in Spin(p)\times Spin(q)\subset Spin(n)\subset Spin(n+1),$ $Ad(g)(e_{n+1})=e_{n+1}$).
\begin{thm}\label{thm rep Sn}
Let $B:TM\times TM\rightarrow E$ be a symmetric and bilinear map. The following two statements are equivalent:
\begin{enumerate}
\item There exists an isometric immersion $F$ of $M$ into $S^n$ with normal bundle $E$ and second fundamental form $B.$
\item There exists a spinor field $\varphi\in\Gamma(U\Sigma)$ satisfying
\begin{equation}\label{eqn phi Sn}
\nabla_X\varphi=-\frac{1}{2}\sum_{j=1}^pe_j\cdot B(X,e_j)\cdot\varphi+\frac{1}{2} X\cdot\nu\cdot\varphi
\end{equation}
for all $X\in TM.$
\end{enumerate}
Moreover we have the representation formula
\begin{equation}\label{formula F Sn}
F=\langle\langle\nu\cdot\varphi,\varphi\rangle\rangle\hspace{.5cm}\in S^n\subset\R^{n+1},
\end{equation}
where the brackets $\langle\langle.,.\rangle\rangle$ are defined as in (\ref{def brackets 1})-(\ref{def brackets 2}).
\end{thm}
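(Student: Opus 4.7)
The plan is to reduce Theorem \ref{thm rep Sn} to the Euclidean Theorem \ref{thm main result} by embedding $S^n$ as the unit hypersphere in $\R^{n+1}$ with outward unit normal $\nu$ (the position vector). If $M\subset S^n\subset\R^{n+1}$ carries normal bundle $E$ in $S^n$ and second fundamental form $B$, then in $\R^{n+1}$ its normal bundle becomes $E\oplus\R\nu$ and its second fundamental form is
\[
B'(X,Y)=B(X,Y)-\langle X,Y\rangle\,\nu,
\]
the correction $-\langle X,Y\rangle\,\nu$ being the second fundamental form of $S^n$ in $\R^{n+1}$. A direct substitution then shows that the generalized Killing equation (\ref{killing equation}), written in $\R^{n+1}$ with data $(E\oplus\R\nu,B')$, is exactly (\ref{eqn phi Sn}). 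The bundle $\Sigma=\tilde{Q}\times_\rho Cl_{n+1}$ and the connection $\nabla$ are well-defined over the $Spin(p)\times Spin(q)$-principal bundle $\tilde{Q}$ because $e_{n+1}$ is fixed by the adjoint action of $Spin(p)\times Spin(q)\subset Spin(n)\subset Spin(n+1)$; in particular, $\nu$ is $\nabla$-parallel.

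For $2)\Rightarrow 1)$, a spinor $\varphi\in\Gamma(U\Sigma)$ solving (\ref{eqn phi Sn}) therefore satisfies the hypothesis of Theorem \ref{thm main result} for an immersion into $\R^{n+1}$, producing an isometric immersion $F_1=\int\xi$ with $\xi(X)=\langle\langle X\cdot\varphi,\varphi\rangle\rangle$, normal bundle $E\oplus\R\nu$ and second fundamental form $B'$. The crucial step is then to show that
\[
F:=\langle\langle\nu\cdot\varphi,\varphi\rangle\rangle=\tau[\varphi]\,\nu\,[\varphi]
\]
coincides with $F_1$ up to an additive constant and lies in $S^n$. Pointwise, since $[\varphi]\in Spin(n+1)$ so that $[\varphi]\tau[\varphi]=1$, and $\nu^2=-1$ in $Cl_{n+1}$, one gets $F\cdot F=\tau[\varphi]\,\nu^2\,[\varphi]=-1$, hence $\langle F,F\rangle=1$. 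To compute $dF$, I differentiate using $\nabla\nu=0$ and the compatibility of $\nabla$ with $\langle\langle\cdot,\cdot\rangle\rangle$, substitute (\ref{eqn phi Sn}) for $\nabla_X\varphi$, and use the anticommutations $\nu\cdot X=-X\cdot\nu$ for $X\in TM$ and $\nu\cdot B(X,e_j)=-B(X,e_j)\cdot\nu$. The term $\tfrac{1}{2}X\cdot\nu\cdot\varphi$ contributes $\nu\cdot X\cdot\nu\cdot\varphi=X\cdot\varphi$, which produces $\xi(X)$ after the $(id+\tau)$-symmetrization (using $\tau\xi(X)=\xi(X)$), while the term $\sum_j e_j\cdot B(X,e_j)\cdot\nu$ obtained after moving $\nu$ to the right is $\tau$-antisymmetric and hence vanishes under $(id+\tau)$ by the same argument as in the proof of Proposition \ref{lem xi closed}. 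Thus $dF=\xi$, so $F:M\to S^n$ is an isometric immersion.

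To finish, I identify the normal bundle and second fundamental form in $S^n$: the bundle isomorphism $\Phi_E:E\oplus\R\nu\to(TF(M))^{\perp,\R^{n+1}}$ from Proposition \ref{lem F isometry} sends $\nu$ to $\xi(\nu)=F$, the outward unit normal to $S^n$; consequently $\Phi_E$ restricts to a connection- and second-fundamental-form-preserving isometry of $E$ onto the normal bundle of $F(M)$ in $S^n$, and the $E$-component of $B'$ equals $B$. The reverse direction $1)\Rightarrow 2)$ is immediate: an isometric immersion $F:M\to S^n\subset\R^{n+1}$ has augmented data $(E\oplus\R F,\,B-\langle\cdot,\cdot\rangle F)$ in $\R^{n+1}$, and the restriction of a parallel spinor of $\R^{n+1}$ gives a section of $U\Sigma$ satisfying (\ref{killing equation}) for this data, which unwinds to (\ref{eqn phi Sn}). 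The main obstacle in this plan is the differential identity $dF=\xi$, which requires careful bookkeeping of signs and $\tau$-symmetries in the Clifford terms involving the extra factor $\nu$.
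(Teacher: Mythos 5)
Your proposal is correct and its computational core coincides with the paper's: the paper also defines $F=\langle\langle\nu\cdot\varphi,\varphi\rangle\rangle$, notes $F=Ad([\varphi]^{-1})(e_{n+1})\in S^n$, proves exactly your key identity $dF(X)=\langle\langle X\cdot\varphi,\varphi\rangle\rangle$ (Lemma \ref{lem dF Sn}, using $\nabla\nu=0$, the anticommutation $\nu\cdot X\cdot\nu=X$, and the $\tau$-antisymmetry of the term $\nu\cdot e_j\cdot B(X,e_j)$ coming from the mutual orthogonality of the three vectors), and then identifies normal bundle, connection and second fundamental form as in Proposition \ref{lem F isometry}. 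What you do differently is to make the reduction to the Euclidean case explicit: you observe that \eqref{eqn phi Sn} is precisely \eqref{killing equation} for the augmented data $E\oplus\R\nu$ and $B'=B-\langle\cdot,\cdot\rangle\,\nu$, which renders the direction $1)\Rightarrow 2)$ (omitted in the paper) an immediate consequence of the Gauss formula, and lets you quote Theorem \ref{thm main result} for the $\R^{n+1}$-geometry of the image rather than redoing those computations. This packaging is cleaner, with one caveat: the intermediate immersion $F_1=\int\xi$ requires $M$ simply connected, whereas the whole point of the explicit formula $F=\langle\langle\nu\cdot\varphi,\varphi\rangle\rangle$ is that no integration (hence no topological hypothesis) is needed; since you do compute $dF=\xi$ directly and can apply the normal-bundle identification of Proposition \ref{lem F isometry} to $F$ itself, you should drop the detour through $F_1$. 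You should also record, as you gesture at, the routine identification of $\tilde Q\times_\rho Cl_{n+1}$ with the twisted spinor bundle of $(TM,E\oplus\R\nu)$, which holds because $\R\nu$ is trivial and flat and $e_{n+1}$ is fixed by $Ad$ of $Spin(p)\times Spin(q)$.
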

\begin{proof}
We only prove that (2) implies (1), using the explicit formula (\ref{formula F Sn}). Setting $F=\langle\langle\nu\cdot\varphi,\varphi\rangle\rangle,$ we have 
$$F= [\varphi]^{-1} e_{n+1} [\varphi]=Ad([\varphi]^{-1})(e_{n+1})$$ 
where $[\varphi]\in Spin(n+1)$ represents $\varphi$ in some frame $\tilde{s}\in\tilde{Q}$ and $Ad:Spin(n+1)\rightarrow SO(n+1)$ is the natural double covering; thus $F$ belongs to $S^n.$ We will need the following
\begin{lem}\label{lem dF Sn}
If $\varphi\in \Gamma(U\Sigma)$ is a solution of (\ref{eqn phi Sn}) then $F=\langle\langle\nu\cdot\varphi,\varphi\rangle\rangle$ is such that, for all $X\in TM,$ 
\begin{equation}\label{dF Sn}
dF(X)=\langle\langle X\cdot\varphi,\varphi\rangle\rangle.
\end{equation}
\end{lem}
\begin{proof}
We first observe that $\nabla\nu=0:$ if $\alpha$ is the connection form on $\tilde{Q}$ and $\tilde{s}\in \Gamma(\tilde{Q})$ is a local frame, then $\nu=[\tilde{s},e_{n+1}]$ and 
$$\nabla_X\nu=[\tilde{s},\partial_Xe_{n+1}+Ad_*(\alpha(\tilde{s}_*(X)))(e_{n+1})]=0$$ 
for all $X\in TM,$ since $e_{n+1}$ is constant and $\alpha$ takes values in $\Lambda^2\R^n\subset Cl_n$. Thus, for all $X\in TM,$
\begin{eqnarray*}
dF(X)&=&\langle\langle \nu\cdot\nabla_X\varphi,\varphi\rangle\rangle+\langle\langle \nu\cdot\varphi,\nabla_X\varphi\rangle\rangle\\
&=&(id+\tau)\langle\langle \nu\cdot\nabla_X\varphi,\varphi\rangle\rangle\\
&=&-\frac{1}{2}(id+\tau)\sum_{j=1}^p\langle\langle \nu\cdot e_j\cdot B(X,e_j)\cdot\varphi,\varphi\rangle\rangle+\frac{1}{2}(id+\tau)\langle\langle X\cdot\varphi,\varphi\rangle\rangle.
\end{eqnarray*}
But
\begin{eqnarray*}
\tau\langle\langle \nu\cdot e_j\cdot B(X,e_j)\cdot\varphi,\varphi\rangle\rangle&=&\langle\langle \varphi,\nu\cdot e_j\cdot B(X,e_j)\cdot\varphi\rangle\rangle\\
&=&\langle\langle B(X,e_j)\cdot e_j\cdot \nu\cdot\varphi,\varphi\rangle\rangle\\
&=&-\langle\langle \nu\cdot e_j\cdot B(X,e_j)\cdot\varphi,\varphi\rangle\rangle
\end{eqnarray*}
since the three vectors $B(X,e_j),$ $e_j$ and  $\nu$ are mutually orthogonal, and
$$\tau\langle\langle X\cdot\varphi,\varphi\rangle\rangle=\langle\langle \varphi, X\cdot\varphi\rangle\rangle=\langle\langle X\cdot\varphi,\varphi\rangle\rangle.$$
Thus (\ref{dF Sn}) follows.
\end{proof}
By the lemma and the properties of the Clifford product, $F$ is an isometric immersion, and the map 
\begin{eqnarray*}
E&\rightarrow& TS^n \\
X\in E_m&\mapsto& (F(m),\langle\langle X\cdot\varphi,\varphi\rangle\rangle)
\end{eqnarray*} 
identifies $E$ with the normal bundle of $F(M)$ into $S^n;$ it moreover identifies the connection on $E$ with the normal connection of $F(M)$ in $S^n$ and $B$ with the second fundamental form. We omit the proof since it is very similar to the proof of Lemma \ref{lem F isometry}. 
\end{proof}
\begin{rem}
Taking the trace of (\ref{eqn phi Sn}) we get
\begin{equation}\label{dirac equation Sn}
D\varphi=\frac{p}{2}\left(\vec{H}-\nu\right)\cdot\varphi
\end{equation}
where $\vec{H}=\frac{1}{p}\sum_{j=1}^pB(e_j,e_j)$ is the mean curvature vector of $M$ in $S^n.$ 
\end{rem}
\begin{rem}
We may also obtain a proof using spinors of the fundamental theorem of submanifold theory in $S^n$, showing, as in Section \ref{section fundamental theorem}, that the equations of Gauss, Codazzi and Ricci in a space of constant sectional curvature 1 are exactly the integrability conditions of (\ref{eqn phi Sn}).
\end{rem}
We finally show how to recover the spinorial characterization of a surface in $S^3$ given by Morel in \cite{Mo}. In the model $Cl_4\simeq \mathbb{H}(2)$ we have 
$$\varphi=\left(\begin{array}{cc}[\varphi^+]&0\\0&[\varphi^-]\end{array}\right),\hspace{.5cm}F=\left(\begin{array}{cc}0&\overline{[\varphi^+]}[\nu][\varphi^-]\\-\overline{[\varphi^-]}[\nu][\varphi^+]&0\end{array}\right)$$
and
$$\xi(X)=\left(\begin{array}{cc}0&\overline{[\varphi^+]}[X][\varphi^-]\\-\overline{[\varphi^-]}[X][\varphi^+]&0\end{array}\right)$$
where $[\varphi^+],$ $[\varphi^-],$ $[\nu]$ and $[X]\in\HH$ represent $\varphi^+,$ $\varphi^-,$ $\nu$ and $X$ in some spinor frame adapted to the immersion in $S^3;$ thus Lemma \ref{lem dF Sn} gives
$$F\simeq\overline{[\varphi^+]}[\nu][\varphi^-]\hspace{.5cm}\mbox{and}\hspace{.5cm}dF(X)\simeq\overline{[\varphi^+]}[X][\varphi^-].$$
If $[\varphi^+]$ is given, this system has a solution $[\varphi^-],$ unique up to the multiplication by $S^3$ on the right. The spinor field $\varphi$ is thus essentially determined by its component $\varphi^+,$ which may be identified with a spinor field $\psi\in\Gamma(\Sigma M)$ solution of 
$$D\psi=H\psi-i\overline{\psi},\hspace{1cm}|\psi|=1;$$
details are given in \cite{BLR}. This is the spinor characterization of an immersion in $S^3$ given in \cite{Mo}.
\subsection{Submanifolds of $\mathbb{H}^n$}
We now consider the $n$-dimensional hyperbolic space $\mathbb{H}^n$ as a hypersurface of the Minkowski space $\R^{n,1}.$ Since the constructions of the paper may be also carried out in a linear space with a semi-riemannian metric, we obtain a spinor representation of a submanifold in $\mathbb{H}^n$ exactly as we did for a submanifold in $S^n.$ We thus only state the results here, and refer to the previous section for the proofs. Let $M$ be a riemannian manifold of dimension $p,$ and $E$ be a bundle on $M$ of rank $q=n-p,$ with a Riemannian fibre metric and a compatible connection; we assume that $TM$ and $E$ are spin, and consider
$$\Sigma:=\tilde{Q}\times_{\rho} Cl_{n,1}$$
where $\tilde{Q}=\tilde{Q}_M\times_M \tilde{Q}_E$ is the $Spin(p)\times Spin(q)$ principal bundle given by the two spin structures and $\rho:Spin(p)\times Spin(q)\rightarrow Aut(Cl_{n,1})$ is the representation obtained by the composition of the maps 
\begin{equation}\label{spin_pq_in_spin_n,1}
Spin(p)\times Spin(q)\rightarrow Spin(n)\subset Spin(n,1)
\end{equation}
and 
\begin{equation}\label{spin_n,1 in Cl_n,1}
Spin(n,1)\rightarrow Aut(Cl_{n,1}).
\end{equation}
The maps in (\ref{spin_pq_in_spin_n,1}) correspond to the decompositions
$$\R^p\oplus\R^q=:\R^n\hspace{.5cm}\subset \R^n\oplus\R e_{n+1}=:\R^{n,1},$$ and in (\ref{spin_n,1 in Cl_n,1}) the action of $Spin(n,1)$ on $Cl_{n,1}$ is the multiplication on the left; here $e_{n+1}$ is a vector with negative norm $-1.$ We also define
$$U\Sigma=\tilde{Q}\times_{\rho} Spin(n,1)\ \subset \Sigma.$$
Let us denote by $\nu$ the element of the Clifford bundle $\tilde{Q}\times_{Ad} Cl_{n,1}$ such that its component in an arbitrary frame $\tilde{s}\in \tilde{Q}$ is the constant vector $e_{n+1}.$
\begin{thm}\label{thm Hn}
Let $B:TM\times TM\rightarrow E$ be a symmetric and bilinear map. The following two statements are equivalent:
\begin{enumerate}
\item There exists an isometric immersion $F$ of $M$ into $\mathbb{H}^n$ with normal bundle $E$ and second fundamental form $B.$
\item There exists a spinor field $\varphi\in\Gamma(U\Sigma)$ satisfying
\begin{equation}\label{eqn phi Hn}
\nabla_X\varphi=-\frac{1}{2}\sum_{j=1}^pe_j\cdot B(X,e_j)\cdot\varphi-\frac{1}{2} X\cdot\nu\cdot\varphi\end{equation}
for all $X\in TM.$
\end{enumerate}
Moreover we have the representation formula
\begin{equation}\label{formula F Hn}
F=\langle\langle\nu\cdot\varphi,\varphi\rangle\rangle\hspace{.5cm}\in \mathbb{H}^n\subset\R^{n,1},
\end{equation}
where the brackets $\langle\langle.,.\rangle\rangle$ are defined as in (\ref{def brackets 1})-(\ref{def brackets 2}).
\end{thm}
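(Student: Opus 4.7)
The plan is to replay the proof of Theorem \ref{thm rep Sn} line by line, tracking the sign changes that arise from working in $Cl_{n,1}$ rather than $Cl_{n+1}$. The direction $(1)\Rightarrow(2)$ proceeds exactly as in the spherical case, restricting a parallel spinor of $\R^{n,1}$ first to $\mathbb{H}^n$ and then to $M$; the only point to verify is that the Gauss formula for the hyperboloid $\mathbb{H}^n\subset\R^{n,1}$—whose shape operator with respect to the position-vector normal is $-\iid$, as opposed to $+\iid$ for $S^n\subset\R^{n+1}$—produces precisely the $-\frac{1}{2}X\cdot\nu\cdot\varphi$ term of \eqref{eqn phi Hn}.

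For $(2)\Rightarrow(1)$, the first step is to show $F=\langle\langle\nu\cdot\varphi,\varphi\rangle\rangle\in\mathbb{H}^n$. In a frame $\tilde{s}\in\tilde{Q}$ we have $F=\tau[\varphi]\,e_{n+1}\,[\varphi]=Ad([\varphi]^{-1})(e_{n+1})$ with $[\varphi]\in Spin(n,1)$; since $Ad$ preserves the Minkowski quadratic form, $\langle F,F\rangle_{n,1}=-1$, and a connectedness argument (the identity component $Spin^0(n,1)$ projects into the identity component of $O(n,1)$, which preserves the upper sheet) places $F$ on $\mathbb{H}^n$.

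The second step is to compute $dF$ and show $dF(X)=\langle\langle X\cdot\varphi,\varphi\rangle\rangle$. Since $\nu$ has constant component $e_{n+1}$ and the connection form on $\tilde{Q}$ takes values in $\Lambda^2\R^n\subset Cl_{n,1}$ (which does not involve $e_{n+1}$), one still has $\nabla\nu=0$, and the same manipulation as in Lemma \ref{lem dF Sn} gives
\begin{align*}
dF(X)=-\tfrac{1}{2}(id+\tau)\sum_{j}\langle\langle\nu\cdot e_j\cdot B(X,e_j)\cdot\varphi,\varphi\rangle\rangle-\tfrac{1}{2}(id+\tau)\langle\langle\nu\cdot X\cdot\nu\cdot\varphi,\varphi\rangle\rangle.
\end{align*}
The sum is killed by the three-mutually-orthogonal-vectors $\tau$-antisymmetry argument. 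In $Cl_{n,1}$ one has $\nu^2=+1$ (since $\langle e_{n+1},e_{n+1}\rangle_{n,1}=-1$) and $X\cdot\nu+\nu\cdot X=0$, so $\nu\cdot X\cdot\nu=-X$; this extra minus sign is exactly compensated by the change from $+\frac{1}{2}X\cdot\nu$ (spherical case) to $-\frac{1}{2}X\cdot\nu$ (hyperbolic case), yielding the desired formula. Tangency $\langle dF(X),F\rangle_{n,1}=0$ then follows from $[X]e_{n+1}+e_{n+1}[X]=-2\langle X,e_{n+1}\rangle_{n,1}=0$.

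Because $dF(X)=\langle\langle X\cdot\varphi,\varphi\rangle\rangle$ has exactly the same form as the 1-form $\xi$ of Proposition \ref{lem F isometry}, the remaining claims—that $F$ is an isometric immersion and that $X\in E_m\mapsto(F(m),\langle\langle X\cdot\varphi,\varphi\rangle\rangle)$ identifies $E$ with the normal bundle of $F(M)$ in $\mathbb{H}^n$ preserving fibre metrics, the normal connection, and the second fundamental form—follow by the verbatim Clifford computations of Proposition \ref{lem F isometry}, with Euclidean inner products replaced by Minkowski ones; positive-definiteness was never used in those arguments. The principal technical obstacle is the systematic sign bookkeeping in $Cl_{n,1}$, together with the use of the standard facts $\tau(g)g=1$ for $g\in Spin^0(n,1)$ and the preservation of the upper sheet of $\mathbb{H}^n$ by $Ad(Spin^0(n,1))$.
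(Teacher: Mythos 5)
Your proposal is correct and follows exactly the route the paper intends: the paper gives no separate proof of Theorem \ref{thm Hn}, stating only that the constructions carry over to the semi-Riemannian setting and referring to the $S^n$ case, and your sign bookkeeping (in particular $\nu^2=+1$ in $Cl_{n,1}$ so that $-\tfrac{1}{2}\nu\cdot X\cdot\nu\cdot\varphi=+\tfrac{1}{2}X\cdot\varphi$, recovering $dF(X)=\langle\langle X\cdot\varphi,\varphi\rangle\rangle$) is precisely the verification needed. Your added remarks on $\tau(g)g=1$ for $Spin^0(n,1)$ and on the connectedness argument selecting the upper sheet of the hyperboloid are points the paper leaves implicit, and they are handled correctly.
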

We may also recover the spinor characterization of an immersion of a surface in $\mathbb{H}^3$ given by Morel in \cite{Mo}: if $M$ is a surface and $(e_1,e_2)$ is an orthonormal basis of $TM,$ setting $\vec H_{\mathbb H^3}:=\frac{1}{2}\left(B(e_1,e_1)+B(e_2,e_2)\right)$ we see that (\ref{eqn phi Hn}) is equivalent to
$$D\varphi=(\vec H_{\mathbb H^3}+\nu)\cdot\varphi$$
where $\varphi$ is a spinor field which is represented in a frame $\tilde{s}\in\tilde{Q}$ by $[\varphi]$ belonging to $Spin(3,1).$ This is exactly the spinor representation of an immersion in $\mathbb{H}^3$ as described in \cite{Bay} Section 5, where it is moreover proved that it is equivalent to the spinor characterization given in \cite{Mo}. 
\\
\\\textbf{Acknowledgments:} P. Bayard was supported by the project PAPIIT IA105116.

\end{document}